\date{{\color{black}April 16, 2026 (final version)}}
\definecolor{myseagreen}{HTML}{3FBC9D}
\setlist[enumerate]{nosep}
\definecolor{labelkey}{rgb}{0,0.08,0.45}
\definecolor{refkey}{rgb}{0,0.6,0.0}
\definecolor{Brown}{rgb}{0.45,0.0,0.05}
\definecolor{lime}{rgb}{0.00,0.8,0.0}
\definecolor{lblue}{rgb}{0.5,0.5,0.99}
\definecolor{OliveGreen}{rgb}{0,0.6,0}
\definecolor{tyrianpurple}{rgb}{0.4, 0.01, 0.24}
\colorlet{hlcyan}{cyan!30}
\def\namedlabel#1#2{\begingroup
   \def\@currentlabel{#2}%
   \label{#1}\endgroup
}
\newcommand{\seppthree}{\setlength{\itemsep}{-3pt}}
\providecommand{\siff}{\Leftrightarrow}
\newcommand{\weakly}{\ensuremath{\:{\rightharpoonup}\:}}
\newcommand{\nnn}{\ensuremath{{n\in{\mathbb N}}}}
\newcommand{\thalb}{\ensuremath{\tfrac{1}{2}}}
\newcommand{\menge}[2]{\big\{{#1}~\big |~{#2}\big\}}
\newcommand{\fenv}[1]%
{\ensuremath{\,\overrightarrow{\operatorname{env}}_{#1}}}
\newcommand{\benv}[1]%
{\ensuremath{\,\overleftarrow{\operatorname{env}}_{#1}}}
\newcommand{\scal}[2]{\left\langle{#1},{#2}  \right\rangle}
\newcommand{\exi}{\ensuremath{\exists\,}}
\newcommand{\RR}{\ensuremath{\mathbb R}}
\newcommand{\NN}{\ensuremath{\mathbb N}}
\newcommand{\inte}{\ensuremath{\operatorname{int}}}
\newcommand{\aff}{\ensuremath{\operatorname{aff}\,}}
\newcommand{\cconv}{\ensuremath{\overline{\operatorname{conv}}\,}}
\newcommand{\caff}{\ensuremath{\overline{\operatorname{aff}}\,}}
\newcommand{\Fix}{\ensuremath{\operatorname{Fix}}}
\newcommand{\pinf}{\ensuremath{+\infty}}
\providecommand{\fejer}{Fej\'{e}r}
\providecommand{\wrt}{with respect to}
{\begin{list}{}{%
\settowidth{\labelwidth}{\textrm{#1~}}%
\setlength{\leftmargin}{\labelwidth+\labelsep}}}%requires macro calc.sty
{\end{list}}
\def\th@plain{%
	\thm@notefont{}% same as heading font
	\itshape % body font
}
\def\th@definition{%
	\thm@notefont{}% same as heading font
	\normalfont % body font
}
\crefname{equation}{}{equations}
\crefname{chapter}{Appendix}{chapters}
\crefname{item}{}{items}
\crefname{enumi}{}{}
\newtheorem{theorem}{Theorem}[section]
\newaliascnt{lemma}{theorem}
\newtheorem{lemma}[lemma]{Lemma}
\newaliascnt{corollary}{theorem}
\newtheorem{corollary}[corollary]{Corollary}
\newaliascnt{proposition}{theorem}
\newtheorem{proposition}[proposition]{Proposition}
\newaliascnt{definition}{theorem}
\newtheorem{definition}[definition]{Definition}
\newaliascnt{example}{theorem}
\newtheorem{example}[example]{Example}
\newaliascnt{fact}{theorem}
\newtheorem{fact}[fact]{Fact}
\newaliascnt{remark}{theorem}
\newtheorem{remark}[remark]{Remark}
\crefname{theorem}{Theorem}{Theorems}
\Crefname{theorem}{Theorem}{Theorems}
\crefname{lemma}{Lemma}{Lemmas}
\Crefname{lemma}{Lemma}{Lemmas}
\crefname{corollary}{Corollary}{Corollaries}
\Crefname{corollary}{Corollary}{Corollaries}
\crefname{proposition}{Proposition}{Propositions}
\Crefname{proposition}{Proposition}{Propositions}
\crefname{definition}{Definition}{Definitions}
\Crefname{definition}{Definition}{Definitions}
\crefname{example}{Example}{Examples}
\Crefname{example}{Example}{Examples}
\crefname{fact}{Fact}{Facts}
\Crefname{fact}{Fact}{Facts}
\crefname{remark}{Remark}{Remarks}
\Crefname{remark}{Remark}{Remarks}
\providecommand{\norm}[1]{\lVert#1\rVert}
\providecommand{\LA}{\Leftarrow}
\providecommand{\RA}{\Rightarrow}
\providecommand{\RR}{\mathbb{R}}
\providecommand{\aff}{\operatorname{aff}}
\providecommand{\NN}{\mathbb{N}}
\providecommand{\RR}{\mathbb{R}}
\providecommand{\NN}{\mathbb{N}}
\definecolor{myblue}{rgb}{0.9,0.9,0.98}
\newcommand{\crefpart}[2]{%
  \hyperref[#2]{\namecref{#1}~\labelcref*{#1}~\ref*{#2}}%
}
\author{
Aleksandr Arakcheev\thanks{
Mathematics, University
of British Columbia,
Kelowna, B.C.\ V1V~1V7, Canada. E-mail:
 \texttt{aleksandr.arakcheev@ubc.ca}.}
~~~~and~~
Heinz H.\ Bauschke\thanks{
Mathematics, University
of British Columbia,
Kelowna, B.C.\ V1V~1V7, Canada. E-mail:
\texttt{heinz.bauschke@ubc.ca}.}
}
\title{\textsf{
  On Opial's Lemma 
}
}
\begin{document}

\maketitle

{\color{black}
\begin{center}
\emph{Dedicated to the memory of H\'edy Attouch, whose extraordinary mathematical vision,\\ profound influence, and generous spirit continue to inspire our community}
\end{center}
}

\begin{abstract}
Opial's Lemma is a fundamental result in the convergence analysis of sequences generated by optimization 
algorithms in real Hilbert spaces. 
We introduce the concept of Opial sequences--—sequences for which the limit of the distance to each point in a given set exists.    
We systematically derive properties of Opial sequences, contrasting them with the well-studied 
Fej\'er monotone sequences, and establish conditions for weak and strong convergence. 
Key results include characterizations of weak convergence via weak cluster points (reaffirming Opial's Lemma), strong convergence via strong cluster points, and the behavior of projections onto Opial sets in terms of asymptotic centers. 
Special cases and examples are provided to highlight 
the subtle differences in convergence behaviour and projection properties compared to the Fej\'er monotone case.
\end{abstract}

{ 
\small
\noindent
{\bfseries 2020 Mathematics Subject Classification:}
{Primary 
47H09, % Contraction-type mappings, nonexpansive mappings, A-proper mappings, etc.
47J26, 
90C25; 
% 49N15, % Duality Theory
%90C46, % Optimality conditions and duality in mathematical programming
Secondary 
47H05, % Monotone operators and generalizations
65K10. 
% 47H14, % Perturbations of nonlinear operators
% 
}

\noindent {\bfseries Keywords:}
asymptotic center, 
Fej\'er monotone sequence,
Fej\'er* monotone sequence, 
nonexpansive mapping, 
Opial Lemma, 
Opial sequence, 
quasi Fej\'er monotone sequence. 
}

\section{Introduction}

Throughout this paper, 
\begin{equation}
\text{$X$ is a real Hilbert space, 
with inner product $\scal{\cdot}{\cdot}$ and induced norm 
$\|\cdot\|$.
}
\end{equation}
Many optimization problems boil down to finding a point in some 
solution set $C$, a subset of $X$.
Often, a solution is found iteratively via some sequence 
$(x_n)_\nnn$ generated by an optimization algorithm. 
The following result is key in the convergence analysis 
and part of the folklore:

\begin{fact}[Opial's Lemma]
\label{f:OL}
Let $C$ be a nonempty subset of $X$ 
and let $(x_n)_\nnn$ be a sequence in $X$ 
such that 
\begin{equation}
\label{e:Os}
(\forall c\in C)\quad \text{$\lim_n\|x_n-c\|$ exists.}
\end{equation}
If all weak cluster points of $(x_n)_\nnn$ lie in $C$, 
then $(x_n)_\nnn$ converges weakly to some point in $C$.
\end{fact}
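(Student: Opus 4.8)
The plan is to run the classical two-subsequence argument. First I would fix an arbitrary $c_0\in C$; since $\lim_n\|x_n-c_0\|$ exists by \eqref{e:Os}, the sequence $(x_n)_\nnn$ is bounded. Because $X$ is a Hilbert space, hence reflexive, $(x_n)_\nnn$ therefore has at least one weak cluster point, and by hypothesis every weak cluster point of $(x_n)_\nnn$ lies in $C$. It thus suffices to show that $(x_n)_\nnn$ has exactly one weak cluster point: a bounded sequence in a reflexive space with a unique weak cluster point converges weakly to that point, and that point will lie in $C$.

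The heart of the argument is the uniqueness of the weak cluster point. Suppose $x$ and $y$ are weak cluster points, say $x_{k_n}\weakly x$ and $x_{l_n}\weakly y$ along suitable subsequences; then $x\in C$ and $y\in C$. Expanding the squared norms gives, for every $\nnn$,
\begin{equation*}
\|x_n-y\|^2-\|x_n-x\|^2 = 2\scal{x_n}{x-y} + \|y\|^2 - \|x\|^2 .
\end{equation*}
Since $x\in C$ and $y\in C$, both $\lim_n\|x_n-x\|$ and $\lim_n\|x_n-y\|$ exist by \eqref{e:Os}, so the left-hand side converges, and hence $\lim_n\scal{x_n}{x-y}$ exists. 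Evaluating this limit along the subsequence $(x_{k_n})_\nnn$ yields the value $\scal{x}{x-y}$, while evaluating it along $(x_{l_n})_\nnn$ yields $\scal{y}{x-y}$. Equating the two gives $\scal{x-y}{x-y}=0$, i.e., $x=y$, as required.

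I do not anticipate a genuine obstacle here; the proof is short and self-contained. The one point requiring care — and the key insight — is recognizing that hypothesis \eqref{e:Os}, when applied to two candidate weak cluster points $x,y\in C$, is exactly what forces $\lim_n\scal{x_n}{x-y}$ to exist, which is what pins down uniqueness via the displayed identity. It is worth noting in passing that neither closedness nor convexity of $C$ is used in this argument: only the nonemptiness of $C$ and the fact that the weak cluster points lie in $C$ are needed.
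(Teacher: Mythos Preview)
Your proof is correct and follows essentially the same approach as the paper: the paper also expands $\|x_n-c_1\|^2-\|x_n-c_2\|^2$ to deduce that $\lim_n\scal{x_n}{c_1-c_2}$ exists, then compares the two subsequential limits. The only structural difference is that the paper first records the slightly more general conclusion $w_1-w_2\in(C-C)^\perp$ for \emph{arbitrary} $c_1,c_2\in C$ (not assuming the cluster points lie in $C$) and then specializes, whereas you plug in $c_1=x$, $c_2=y$ directly; your closing remark that closedness and convexity of $C$ are not used is also noted implicitly in the paper, which works with a general nonempty $C$ throughout \cref{s:two}.
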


We provide a proof of \cref{f:OL} in 
\cref{c:Opialweakchar} below. 
The ingredients for the proof of \cref{f:OL} 
can be traced back to a classical paper by
Opial \cite{Opial}; however, the statement is not explicitly
recorded there. 
For an explicit proof of \cref{f:OL}, see 
Peypouquet's \cite[Lemma~5.2]{Peypouquet}; 
however, he in turn gives credit to 
Baillon's thesis (see \cite[Chapitre~6]{BaillonThesis}). Oftentimes, 
the sequence $(x_n)_\nnn$ satisfies the even stronger property
that $(x_n)_\nnn$ is \emph{Fej\'er monotone with respect to} $C$, i.e., 
\begin{equation}
(\forall c\in C)(\forall\nnn)\quad
\|x_{n+1}-c\|\leq\|x_n-c\|.
\end{equation}
In the \fejer\ monotonicity case, \cref{f:OL} was proved 
by Browder in 1967 (see \cite[Lemma~6]{Browder67}) and 
Browder states that it is essentially due 
Opial \cite{Opial} and implicit in earlier work 
by Sch\"afer \cite{Schafer}.
In many applications, $C$ is typically the (nonempty) 
fixed point set of 
a nonexpansive mapping $T\colon X\to X$, i.e., 
$(\forall x\in X)(\forall y\in X)$ $\|Tx-Ty\|\leq \|x-y\|$, 
and $(x_n)_\nnn = (T^nx_0)_\nnn$. 
In this case, $(T^nx_0)_\nnn$ is indeed \fejer\ monotone 
\wrt\ $\Fix T$. 
Many optimization algorithms fit this \fejer\ monotonicity framework; see, e.g., \cite{BC2017} which also contains 
basic properties of \fejer\ monotone sequences 
in \cite[Section~5.1]{BC2017}. 
Nonetheless, the term Opial's Lemma also refers precisely 
to \cref{f:OL} --- for recent instances, see, e.g., 
work by Attouch and Cabot \cite[Lemma~2.5]{AttCab}
and by Bo\c{t}, Csetnek, and Nguyen \cite[Lemma~A.3]{BCN25}. 

Somewhat surprisingly, sequences satisfying 
\cref{e:Os} have not been studied systematically. 
We thus introduce the following:

\begin{definition}[Opial sequence and Opial set]
\label{d:Os}
Let $C$ be a nonempty subset of $X$. 
We say that a sequence $(x_n)_\nnn$ in $X$ 
is \emph{Opial \wrt\ $C$} if 
\begin{equation}
%\label{e:Os}
(\forall c\in C)\quad \text{$\lim_n\|x_n-c\|$ exists.}
\end{equation}
We occasionally refer to $C$ as the 
corresponding \emph{Opial set}. 
\end{definition}

Clearly, if $(x_n)_\nnn$ is a sequence that is \fejer\ monotone \wrt\ $C$, 
then it is also Opial \wrt\ $C$. 

We point out that various notions have been proposed 
that lie between \fejer\ monotone and Opial sequences --- 
indeed, 
for a sequence $(x_n)_\nnn$ in $X$ and a nonempty subset 
$C$ of $X$, we have:
\begin{itemize}
\seppthree
\item $(x_n)_\nnn$ is \emph{\fejer* monotone \wrt\ $C$}
if 
\begin{equation}
(\forall c\in C)(\exi N\in\NN)(\forall n\geq N)
\quad \|x_{n+1}-c\|\leq\|x_n-c\|.
\end{equation}
This notion was introduced by Behling, Bello-Cruz, Iusem, Liu, and Santos 
in \cite{BBCILS} to analyze circumcenter methods, 
and further analyzed by Behling, Bello-Cruz, Iusem, 
Ribeiro, and Santos \cite{BBCIRS}. 
{ See also the preprint\footnote{{ The preprint \cite{Fstarlong} was posted on arXiv about 9 months after the first version of this manuscript was posted and submitted. While this manuscript 
focuses on Opial sequences, the complementary preprint \cite{Fstarlong} deals with 
the relationships between \fejer, \fejer*, and quasi-\fejer\ sequences.}} \cite{Fstarlong} for an 
in-depth study of this notion.}
\item $(x_n)_\nnn$ is \emph{quasi-\fejer\ monotone \wrt\ $C$ of}
\begin{subequations}
\begin{itemize}
%\seppfour
\item \emph{type~I} if 
$
(\exi (\varepsilon_n)_\nnn\in\ell^1_+)(\forall c\in C)
(\forall\nnn)
\quad \|x_{n+1}-c\|\leq\|x_n-c\|+\varepsilon_n$;
\item \emph{type~II} if 
$(\exi (\varepsilon_n)_\nnn\in\ell^1_+)(\forall c\in C)
(\forall\nnn)
\quad \|x_{n+1}-c\|^2\leq\|x_n-c\|^2+\varepsilon_n$;
\item \emph{type~III} if 
$(\forall c\in C)(\exi (\varepsilon_n)_\nnn\in\ell^1_+)
(\forall\nnn)
\quad \|x_{n+1}-c\|^2\leq\|x_n-c\|^2+\varepsilon_n$.
\end{itemize}
\end{subequations}
The notions were introduced and systematically studied by 
Combettes in \cite{Comb01qF} to which we refer the reader for 
historical comments and applications. 
{ (See also the preprint \cite{Fstarlong}.)} 
{ Indeed, many optimization algorithms generate \fejer\ or 
at least quasi-\fejer\ sequences. Perhaps the most prominent example is 
the Krasnoselskii-Mann iteration, which generates a sequence $(x_n)_\nnn$ via 
\begin{equation}
x_{n+1} = (1-\alpha_n)x_n + \alpha_n Tx_n,
\end{equation}
for a nonexpansive operator $T\colon X\to X$ and some suitably chosen sequence of relaxation parameters.
One typically shows that $(x_n)_\nnn$ is Opial \wrt\ $\Fix T$ and that 
all weak cluster points of $(x_n)_\nnn$ lie in $\Fix T$, which yields 
weak convergence of $(x_n)_\nnn$ thanks to \cref{f:OL}. 
}
\end{itemize}

It is clear that \fejer* monotone sequences are Opial. 
The fact that every quasi-\fejer\ monotone sequence is Opial 
is known (see \cite[Section~3]{Comb01qF}) and a consequence 
of the following:

\begin{fact}[Robbins-Siegmund]$\negthinspace\negthinspace\negthinspace$\footnote{\cref{f:RS} is 
{\color{black} a non-stochastic version}
%the deterministic case 
 of a classical probabilistic result by Robbins and Siegmund \cite{RoSi}. 
See also \cite[Lemma~11 on page~50]{Polyak}, 
\cite[Lemma~3.1]{Comb01qF}, \cite[Lemma~5.31]{BC2017}, as well as
\cite[Theorem~3.2]{NePo} for a very recent quantitative version.}
\label{f:RS}
Let $(\alpha_n)_\nnn,(\beta_n)_\nnn,(\delta_n)_\nnn,
(\varepsilon_n)_\nnn$ be sequences
of nonnegative real numbers such that 
$\sum_\nnn\delta_n<\pinf$ and 
$\sum_\nnn\varepsilon_n<\pinf$ and 
\begin{equation}
%\label{e:RS1}
(\forall\nnn)\quad \alpha_{n+1} \leq 
(1+\delta_n)\alpha_n -\beta_n+\varepsilon_n
\end{equation}
Then $(\alpha_n)_\nnn$ is convergent and 
$\sum_\nnn \beta_n<\pinf$.
\end{fact}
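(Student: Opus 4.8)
This is the classical deterministic Robbins--Siegmund estimate, and the plan is to first rescale away the multiplicative factor $1+\delta_n$ and then run a telescoping argument on a suitably corrected sequence.

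First I would set $P_0:=1$ and $P_{n+1}:=(1+\delta_n)P_n$ for $\nnn$. Since $1+\delta_n\le e^{\delta_n}$ and $\sum_\nnn\delta_n<\pinf$, the nondecreasing sequence $(P_n)_\nnn$ satisfies $1\le P_n\le\exp\!\big(\sum_\nnn\delta_n\big)=:P_\infty<\pinf$; in particular it is bounded above and below by positive constants. Dividing the hypothesis $\alpha_{n+1}\le(1+\delta_n)\alpha_n-\beta_n+\varepsilon_n$ by $P_{n+1}=(1+\delta_n)P_n>0$ and putting $a_n:=\alpha_n/P_n$, $b_n:=\beta_n/P_{n+1}$, $e_n:=\varepsilon_n/P_{n+1}$, all of which lie in $\RP$, yields the reduced inequality $a_{n+1}\le a_n-b_n+e_n$ for every $\nnn$. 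Because $1\le P_n\le P_\infty$, we have $\sum_\nnn e_n\le\sum_\nnn\varepsilon_n<\pinf$, the sequence $(\alpha_n)_\nnn$ converges exactly when $(a_n)_\nnn$ does, and $\sum_\nnn\beta_n<\pinf$ exactly when $\sum_\nnn b_n<\pinf$. So it suffices to treat the reduced inequality, i.e.\ the case $\delta_n\equiv0$.

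For that case I would introduce the tail sums $s_n:=\sum_{k\ge n}e_k\in\RP$, which are finite with $s_n\to0$ and $s_n=e_n+s_{n+1}$, and set $u_n:=a_n+s_n\ge0$. Then $u_{n+1}=a_{n+1}+s_{n+1}\le(a_n-b_n+e_n)+s_{n+1}=u_n-b_n\le u_n$, so $(u_n)_\nnn$ is nonincreasing and bounded below by $0$, hence convergent to some $u_\infty\ge0$; since $s_n\to0$, this gives $a_n=u_n-s_n\to u_\infty$, so $(a_n)_\nnn$, and therefore $(\alpha_n)_\nnn$, converges. Finally, $b_n\le u_n-u_{n+1}$ telescopes to a uniform bound on the partial sums of $(b_n)_\nnn$, so $\sum_\nnn b_n<\pinf$, and undoing the rescaling gives $\sum_\nnn\beta_n<\pinf$. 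There is no serious obstacle; the only step needing care is the boundedness of $(P_n)_\nnn$, which is precisely where the hypothesis $\sum_\nnn\delta_n<\pinf$ enters, after which the whole argument is the telescoping estimate on the monotone sequence $(u_n)_\nnn$.
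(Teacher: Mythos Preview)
Your argument is correct and is in fact the standard proof: rescale by the partial products $P_n=\prod_{k<n}(1+\delta_k)$ to remove the multiplicative perturbation, then telescope the corrected sequence $u_n=a_n+s_n$. The paper itself does not prove \cref{f:RS}; it is recorded there as a Fact with references to \cite{RoSi}, \cite[Lemma~11 on page~50]{Polyak}, \cite[Lemma~3.1]{Comb01qF}, and \cite[Lemma~5.31]{BC2017}, so there is nothing to compare against beyond noting that your proof matches the one found in those references.
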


Let us point out that there exist sequences that are Opial
but are neither \fejer* monotone nor quasi-\fejer\ monotone:

\begin{example}
Suppose that $X=\RR$ and set 
\begin{equation}
(x_n)_\nnn := \left(0,\tfrac{1}{\sqrt{1}},0,\tfrac{1}{\sqrt{2}},0,\tfrac{1}{\sqrt{3}},\ldots\right). \end{equation}
Then $x_n\to 0$, $(x_n)_\nnn$ is Opial \wrt\ $\{0\}$, 
but $(x_n)_\nnn$ is neither \fejer* monotone 
nor quasi-\fejer\ monotone \wrt\ $\{0\}$  
{\color{black} 
(because $\sum_{n\geq 1}1/\sqrt{n}$ diverges to $+\infty$).}

\end{example}

Having seen various notions between \fejer\ monotone and 
Opial sequences, we now state that 
\emph{the goal of this paper is to derive useful properties of
Opial sequences and contrast them to properties 
of \fejer\ monotone sequences}.

We now summarize some key results in the following table. 
Here $(x_n)_\nnn$ is a sequence in $X$ and 
$C$ is a nonempty subset of $X$.
We denote the sets of weak and strong cluster points 
of $(x_n)_\nnn$ by 
$\mathcal{W}$ and 
$\mathcal{S}$, respectively. 
{ When $C$ is closed and convex, then} 
we denote the strong cluster points of $(P_Cx_n)_\nnn$ 
by $\mathcal{S}((P_Cx_n)_\nnn)$; in addition, 
the (well-defined) asymptotic center (see \cref{d:ac} below) of 
$(x_n)_\nnn$ with respect to $C$ is denoted by 
$\widehat{c}$.
Let us explain how one reads the table:
The first column lists a possible additional assumption.
The second and third column present the strongest 
conclusions 
under this extra assumption. 
We write ``same'' in the third column if the Opial 
conclusion is identical to the \fejer\ conclusion, 
and ``when'' when the additional assumption is required to
reach the same conclusion. 
The fourth column proves pointers to the 
relevant conclusion in the Opial case. 

\begin{center}
%\begin{NiceTabular}[c]{lSSSS} % ORIG. Some unit stuff
\small
\begin{NiceTabular}[c]{@{}>{\raggedright\arraybackslash}p{0.14\textwidth}>{\raggedright\arraybackslash}p{0.28\textwidth}>{\raggedright\arraybackslash}p{0.28\textwidth}>{\raggedright\arraybackslash}p{0.16\textwidth}@{}}
\CodeBefore
\rowcolor{gray!15}{1-1} % color for rows 1 and 2
%\rowcolors{3}{blue!15}{}
\rowcolors{3}{gray!10}{}
\Body
\toprule
\Block{1-1}{Assumption} &
%\Block{1-3}{dimensions (cm)} & & &
\Block{1-1}{$(x_n)_\nnn$ is \fejer\ wrt $C$} & 
\Block{1-1}{$(x_n)_\nnn$ is Opial wrt $C$} & 
\Block{1-1}{Notes} \\
\midrule
none &  $(\mathcal{W}-\mathcal{W})\subseteq (C-C)^\perp$ 
& same 
& \cref{t:old} \\
$\mathcal{W}\subseteq C$
&  $x_n\weakly$ some point in $C$ 
&  same 
& \cref{c:Opialweakchar}\\
$\mathcal{S}\cap C\neq\varnothing$
&  $x_n\to$ some point in $C$ 
&  same 
& \cref{p:Opialstrong}\\
$\inte C\neq\varnothing$
&  $x_n\to $ some point in $X$ 
&  $x_n\weakly$ some point in $X$ 
& \cref{c:largeC}\\
none
&  $(x_n)_\nnn$ is \fejer\ wrt $\cconv C$
&  $(x_n)_\nnn$ is Opial wrt $\caff C$
& \cref{p:enlarge}\\
$C=X$
&  $(x_n)_\nnn = (x_0)_\nnn$ 
&  $x_n\weakly$ some point in $X$ 
& \cref{c:X-Opial=>weak}\\
{$C=\cconv C$}
&  $P_Cx_n\to \widehat{c}$ 
&  {$P_Cx_n\weakly \widehat{c}$} when $d_C(x_n)\to 0$
& \cref{t:dto0}\\
{$C=\cconv C$}
&  $P_Cx_n\to \widehat{c}$ 
&  $\mathcal{S}((P_Cx_n)_\nnn) \subseteq  \{\widehat{c}\}$ 
& \cref{p:badlung}\\
{$C=\cconv C$}
&  $P_Cx_n\weakly  \widehat{c}$ 
&  when $C$ is affine
& \cref{t:nice}\\
{$C=\cconv C$}
&  $(d_C(x_n))_\nnn$ is convergent 
&  when $\overline{\{x_n\}_\nnn}$ is compact
& \cref{p:saturday}\\
\bottomrule
\end{NiceTabular}
\normalsize
\end{center}

The sharpness of our results is illustrated by 
various examples provided in this paper. 

The remainder of the paper is organized as follows. 
In \cref{s:two}, we provide basic properties of 
Opial sequences. 
After reviewing results on the asymptotic center, 
we study the projections of Opial sequences in 
\cref{s:three}. 
Finally, the notation we employ is fairly standard and 
largely follows \cite{BC2017} to which we also refer 
the reader for background material.

\section{Weak and strong convergence of Opial sequences}

\label{s:two}

We now systematically derive properties of 
Opial sequences (which we defined in \cref{d:Os}).

\begin{proposition}[basic properties]
\label{p:opial}
Let $(x_n)_\nnn$ be a sequence in $X$ that is Opial \wrt\ a nonempty subset $C$ of $X$. 
Then the following hold:
\begin{enumerate}
\item 
\label{p:opial1}
$(x_n)_\nnn$ is bounded.
\item 
\label{p:opial2}
$\varlimsup_n d_C(x_n)\leq \inf_{c\in C}\lim_n\norm{x_n-c}$.  
% \item 
% \label{p:opial4}
% If $C$ is closed and convex and 
% the orbit $\{x_n\}_\nnn$ is relatively compact, 
% then $\lim d_C(x_n)$ exists, 
% $\lim d_C(x_n) = \min_{c\in C} \lim \|x_n-c\|$, and the minimizer for the 
% minimum is unique\footnote{The minimizer is called 
% the \emph{asymptotic center} of $(x_n)_\nnn$ with respect to $C$. 
% See \cite[Proposition~11.18]{BC2017} for further information.}. 
% \item 
% \label{p:opial5}
% If $C$ is closed and convex and 
% the orbit $\{x_n\}_\nnn$ is relatively compact, 
% then $P_Cx_n\to \bar{c}$, where 
% $\bar{c}$ is the unique minimizer 
% of $C\to\RR\colon c\mapsto \lim\norm{x_n-c}$.
% WHAT HAPPENS WITHOUT COMPACTNESS?
\end{enumerate}
\end{proposition}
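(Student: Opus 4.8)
The plan is to derive both items directly from the defining property of an Opial sequence, using that $C\neq\varnothing$ (so that a reference point $c\in C$ may be fixed) and the usual understanding that ``$\lim_n\|x_n-c\|$ exists'' means the limit is a finite real number.

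For \cref{p:opial1}: fix an arbitrary $c\in C$. By hypothesis $(\|x_n-c\|)_\nnn$ is a convergent, hence bounded, sequence of real numbers; say $\|x_n-c\|\leq M$ for every $\nnn$. The triangle inequality then gives $\|x_n\|\leq\|x_n-c\|+\|c\|\leq M+\|c\|$ for every $\nnn$, so $(x_n)_\nnn$ is bounded.

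For \cref{p:opial2}: for every $\nnn$ and every $c\in C$, the definition of the distance function yields $d_C(x_n)\leq\|x_n-c\|$ (and, since $C\neq\varnothing$, each $d_C(x_n)$ is finite). Passing to the limit superior over $n$ and using that $\lim_n\|x_n-c\|$ exists, we get $\varlimsup_n d_C(x_n)\leq\varlimsup_n\|x_n-c\|=\lim_n\|x_n-c\|$. Since $c\in C$ is arbitrary, taking the infimum over $c\in C$ on the right-hand side yields $\varlimsup_n d_C(x_n)\leq\inf_{c\in C}\lim_n\|x_n-c\|$, as claimed.

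No genuine obstacle is expected here: the argument is a brief unwinding of the definitions. The one point worth flagging is the finiteness convention underlying \cref{p:opial1} --- without it, a ``sequence'' such as $X=\RR$, $C=\{0\}$, $x_n=n$ would vacuously satisfy the Opial condition yet fail to be bounded --- and the (harmless) remark that $d_C$ takes only finite values because $C$ is nonempty.
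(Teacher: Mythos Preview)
Your proof is correct and follows essentially the same approach as the paper's own argument: fix $c\in C$, use convergence of $(\|x_n-c\|)_\nnn$ to get boundedness, and for \cref{p:opial2} bound $d_C(x_n)\leq\|x_n-c\|$, pass to the $\varlimsup$, then infimize over $c$. Your version is slightly more explicit (spelling out the triangle inequality and noting the finiteness convention), but the substance is identical.
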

\begin{proof}
\cref{p:opial1}:
By definition, for every $c\in C$, 
the $\lim_{n}\norm{x_n-c}$ exists.    
Hence $(\|x_n-c\|)_\nnn$ is bounded and thus 
$(x_n)_\nnn$ is bounded. 

\cref{p:opial2}:
Let $c\in C$. 
Then $(\forall\nnn)$ $d_C(x_n)\leq \|x_n-c\|$.
Hence $\varlimsup_n d_C(x_n) \leq \varlimsup_n \|x_n-c\| 
= \lim_{n}\|x_n-c\|$. 
Now take the infimum over $c\in C$. 
\end{proof}

Throughout this paper, we will repeatedly 
use the fact that every 
Opial sequence is bounded. % (see \cref{p:opial}\cref{p:opial1}). 
Because of this, we are interested to learn about 
the location of weak cluster points: 

\begin{theorem}[location of weak cluster points]
\label{t:old}
Let $(x_n)_\nnn$ be a sequence in $X$ that is 
Opial \wrt\ a nonempty subset $C$ of $X$. 
Let $w_1$ and $w_2$ be two weak cluster points of 
$(x_n)_\nnn$. 
Then
\begin{equation}
w_1-w_2\in (C-C)^\perp. %  = (\parasp\caff C)^\perp$.
\end{equation}
\end{theorem}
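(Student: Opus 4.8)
The plan is to exploit the elementary identity, valid for any two points $c_1,c_2\in C$ and any $x\in X$,
\begin{equation*}
\|x-c_1\|^2-\|x-c_2\|^2 = \|c_2\|^2-\|c_1\|^2 - 2\scal{x}{c_1-c_2}.
\end{equation*}
Applying this with $x=x_n$, we obtain $2\scal{x_n}{c_1-c_2} = \|x_n-c_2\|^2 - \|x_n-c_1\|^2 + \|c_2\|^2 - \|c_1\|^2$. By the Opial property, both $\lim_n\|x_n-c_1\|$ and $\lim_n\|x_n-c_2\|$ exist, hence the right-hand side converges as $n\to\infty$; consequently the scalar sequence $\big(\scal{x_n}{c_1-c_2}\big)_\nnn$ converges to some real limit $\ell(c_1,c_2)$, for every pair $c_1,c_2\in C$.

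Next I would pass to the weak cluster points. Since $w_1$ and $w_2$ are weak cluster points of $(x_n)_\nnn$, there are subsequences $(x_{k_n})_\nnn\weakly w_1$ and $(x_{\ell_n})_\nnn\weakly w_2$. For any fixed $d\in C-C$, say $d=c_1-c_2$ with $c_1,c_2\in C$, the full sequence $\big(\scal{x_n}{d}\big)_\nnn$ converges, so its value along either subsequence tends to the same limit $\ell(c_1,c_2)$; but by weak convergence $\scal{x_{k_n}}{d}\to\scal{w_1}{d}$ and $\scal{x_{\ell_n}}{d}\to\scal{w_2}{d}$. Therefore $\scal{w_1}{d}=\scal{w_2}{d}$, i.e.\ $\scal{w_1-w_2}{d}=0$. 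Since $d\in C-C$ was arbitrary, $w_1-w_2\in(C-C)^\perp$, as claimed.

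I do not expect a serious obstacle here; the only point requiring a little care is making sure the convergence of $\big(\scal{x_n}{c_1-c_2}\big)_\nnn$ is deduced cleanly from the Opial hypothesis (the identity above handles this, and boundedness of $(x_n)_\nnn$ from \cref{p:opial}\cref{p:opial1} guarantees all quantities are finite). One could alternatively avoid squares and instead argue that $\|x_n-c_1\|\to\alpha_1$ and $\|x_n-c_2\|\to\alpha_2$ forces the ``directional'' limit, but the polarization-type identity is the most transparent route and keeps the computation to one line.
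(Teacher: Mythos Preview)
Your proof is correct and follows essentially the same route as the paper: expand $\|x_n-c_1\|^2-\|x_n-c_2\|^2$ to show that $(\scal{x_n}{c_1-c_2})_\nnn$ converges, then evaluate this limit along the two weakly convergent subsequences to conclude $\scal{w_1-w_2}{c_1-c_2}=0$. (Minor typo: the constant in your displayed identity should read $\|c_1\|^2-\|c_2\|^2$ rather than $\|c_2\|^2-\|c_1\|^2$, but this is irrelevant to the argument.)
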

\begin{proof}
(We follow the proof for \fejer\ monotone
 sequences presented in \cite[Theorem~6.2.2(ii)]{HBThesis}.)
By \cref{p:opial}\cref{p:opial1}, $(x_n)_\nnn$ is bounded.
Hence there exists subsequences 
$(x_{k_n})_\nnn$ and 
$(x_{l_n})_\nnn$ of $(x_n)_\nnn$ such that 
\begin{equation}
\label{e:0221a}
x_{k_n}\weakly w_1\;\text{and}\; x_{l_n}\weakly w_2.
\end{equation}
Let $c_1$ and $c_2$ be in $C$. 
Because $(x_n)_\nnn$ is Opial with respect to $C$, 
we have 
\begin{equation*}
  \lim_{n}\norm{x_n-c_1}\;\text{and}\;
  \lim_{n}\norm{x_n-c_2}\;\text{exist.}
\end{equation*}
Hence $(\|x_n-c_1\|^2)_\nnn$ and $(\|x_n-c_2\|^2)_\nnn$ are convergent 
sequences, and so is their  difference
\begin{equation*}
  (\|x_n-c_1\|^2-\|x_n-c_2\|^2)_\nnn
\end{equation*}
After expanding and simplifying, we deduce that 
$(-2\scal{x_n}{c_1-c_2})_\nnn$ is convergent, and so 
\begin{equation}
\label{e:0221b}
\lim_{n} \scal{x_n}{c_1-c_2}\;\text{exists.}
\end{equation}
Thus, taking 
the limit in \cref{e:0221b} along the subsequences from 
\cref{e:0221a}, we obtain 
\begin{equation*}
\scal{w_1}{c_1-c_2}=\scal{w_2}{c_1-c_2}. 
\end{equation*}
Therefore, 
$\scal{w_1-w_2}{c_1-c_2}=0$, as claimed.
\end{proof}

\begin{corollary}[Opial's Lemma]
\label{c:Opialweakchar}
Let $(x_n)_\nnn$ be a sequence in $X$ that is 
Opial \wrt\ a nonempty subset $C$ of $X$. 
Then: $(x_n)_\nnn$ converges weakly to some point in $C$
$\siff$
all weak cluster points of $(x_n)_\nnn$ lie in $C$.
\end{corollary}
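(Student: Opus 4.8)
The plan is to treat the two implications separately; the forward one is immediate and essentially all the work sits in the converse.

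For ``$\Rightarrow$'': if $x_n\weakly x$ for some $x\in C$, then $x$ is the unique weak cluster point of $(x_n)_\nnn$, and it already lies in $C$ by hypothesis, so there is nothing to prove.

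For ``$\Leftarrow$'', assume that every weak cluster point of $(x_n)_\nnn$ lies in $C$. First, by \cref{p:opial}\cref{p:opial1} the sequence $(x_n)_\nnn$ is bounded; since $X$ is a Hilbert (hence reflexive) space, it therefore has at least one weak cluster point $w$, and $w\in C$ by hypothesis. The key step is uniqueness of the weak cluster point: if $w_1$ and $w_2$ are two weak cluster points, then both lie in $C$, so on the one hand $w_1-w_2\in C-C$, while on the other hand \cref{t:old} gives $w_1-w_2\in (C-C)^\perp$; intersecting these forces $\norm{w_1-w_2}^2=\scal{w_1-w_2}{w_1-w_2}=0$, i.e.\ $w_1=w_2$. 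Thus $(x_n)_\nnn$ is a bounded sequence with a unique weak cluster point $w\in C$. The final step invokes the standard fact that such a sequence converges weakly to that cluster point: otherwise some weakly open neighborhood $U$ of $w$ is avoided by a subsequence, which—being bounded—has a further subsequence converging weakly to a point of the weakly closed set $X\setminus U$, producing a second, distinct weak cluster point and hence a contradiction. Therefore $x_n\weakly w\in C$.

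I do not anticipate a genuine obstacle here: the substantive input, \cref{t:old}, is already established, and the remaining ingredients—weak sequential compactness of bounded subsets of a Hilbert space, and the passage from ``unique weak cluster point'' to ``weak limit''—are textbook facts. The one spot meriting a little care is this last step, where one must avoid tacitly assuming that the weak topology is metrizable; the neighborhood argument above sidesteps this cleanly.
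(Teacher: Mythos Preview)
Your proof is correct and follows essentially the same route as the paper: boundedness from \cref{p:opial}\cref{p:opial1}, then \cref{t:old} to force $w_1-w_2\in(C-C)\cap(C-C)^\perp=\{0\}$ for any two weak cluster points. You are simply more explicit than the paper in spelling out the existence of a weak cluster point and the passage from ``unique weak cluster point'' to ``weak limit,'' both of which the paper leaves implicit.
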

\begin{proof}
``$\RA$'': Clear.
``$\LA$'': By \cref{p:opial}\cref{p:opial1}, 
the sequence $(x_n)_\nnn$ is bounded.
Let $w_1$ and $w_2$ be two weak cluster points of $(x_n)_\nnn$.
By assumption, $\{w_1,w_2\}\subseteq C$ and so 
$w_1-w_2\in C-C$.
On the other hand, by \cref{t:old},
$w_1-w_2\in (C-C)^\perp$.
Altogether $w_1-w_2\in (C-C)\cap (C-C)^\perp=\{0\}$ and so 
$w_1=w_2$.
\end{proof}

\cref{c:Opialweakchar} provides a convenient characterization 
of weak convergence to an element in the Opial set.
We now present the counterpart for strong convergence:

\begin{proposition}
\label{p:Opialstrong}
Let $(x_n)_\nnn$ be a sequence in $X$ that is 
Opial \wrt\ a nonempty subset $C$ of $X$. 
Then: $(x_n)_\nnn$ converges strongly to some point in $C$
$\siff$ there exists a cluster point of $(x_n)_\nnn$ in $C$.
\end{proposition}
\begin{proof}
``$\Rightarrow$'': Clear. 
``$\Leftarrow$'': 
Suppose $c$ is a cluster point of $(x_n)_\nnn$ that lies in $C$. 
Then there exists a subsequence $(x_{k_n})_\nnn$ of $(x_n)_\nnn$ 
such that 
\begin{equation*}
x_{k_n}\to c\in C. 
\end{equation*}
Because $(x_n)_\nnn$ is Opial \wrt\ $C$, the 
$\ell := \lim_{n\to\infty}\norm{x_{n}-c}$ exists.
Of course, $\norm{x_{k_n}-c}\to\ell$ as well. 
But $x_{k_n}-c\to 0$, so $\ell=0$. 
It follows that $\norm{x_n-c}\to 0$ and 
therefore $x_n\to c$. 
\end{proof}

\begin{corollary}
Let $(x_n)_\nnn$ be a sequence in $X$ that is 
Opial \wrt\ $X$ and such that 
$\{x_n\}_\nnn$ is relatively compact,
 i.e., $\overline{\{x_n\}_\nnn}$ is compact. 
Then $(x_n)_\nnn$ is strongly convergent. 
\end{corollary}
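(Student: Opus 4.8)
The plan is to read this off directly from \cref{p:Opialstrong}. The only hypothesis of that proposition that needs to be verified is the existence of a strong cluster point of $(x_n)_\nnn$ lying in the Opial set, which here is all of $X$.

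First I would note that since $\{x_n\}_\nnn$ is relatively compact, the closure $\overline{\{x_n\}_\nnn}$ is compact, and hence the sequence $(x_n)_\nnn$ admits a strongly convergent subsequence; call its limit $c$. Trivially $c\in X$, so $c$ is a strong cluster point of $(x_n)_\nnn$ that belongs to $C=X$. (Relative compactness is needed here only to guarantee \emph{some} strong cluster point; boundedness alone would merely give a weak cluster point.)

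Next I would apply \cref{p:Opialstrong} with $C=X$: the sequence $(x_n)_\nnn$ is Opial \wrt\ $X$ by hypothesis, and we have just exhibited a cluster point in $X$; therefore $(x_n)_\nnn$ converges strongly (to $c$). This completes the argument.

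There is no real obstacle: the statement is a one-line corollary of \cref{p:Opialstrong}, and the only thing to be careful about is phrasing why relative compactness — rather than mere boundedness — is the right hypothesis, namely that it upgrades "bounded, hence weak cluster point" to "relatively compact, hence strong cluster point," which is exactly what feeds \cref{p:Opialstrong}.
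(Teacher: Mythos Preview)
Your proof is correct and follows exactly the same approach as the paper: use relative compactness to obtain a strong cluster point, then invoke \cref{p:Opialstrong} with $C=X$. The paper's proof is even terser (two sentences), but the logic is identical.
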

\begin{proof}
The relative compactness assumption guarantees
that $(x_n)_\nnn$ has a strong cluster point. 
Now apply \cref{p:Opialstrong} with $C=X$. 
\end{proof}

Loosely speaking, the next result says that if 
the Opial set $C$ is ``sufficiently large'', then the Opial sequence converges weakly: 

\begin{corollary}
\label{c:largeC}
Let $(x_n)_\nnn$ be a sequence in $X$ that is Opial \wrt\ a nonempty subset $C$ of $X$. 
If $\caff C =X$, then $(x_n)_\nnn$ converges weakly 
to a point in $X$. 
\end{corollary}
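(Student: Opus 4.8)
The plan is to reduce everything to \cref{t:old} by first translating the hypothesis $\caff(C)=X$ into the statement $(C-C)^\perp=\{0\}$. Fixing any $c_0\in C$, one has $\operatorname{span}(C-C)=\operatorname{span}(C-c_0)$ (since $c-c'=(c-c_0)-(c'-c_0)$ and $C-c_0\subseteq C-C$), and $\operatorname{aff}(C)=c_0+\operatorname{span}(C-c_0)$; taking closures gives $\caff(C)=c_0+\cspan(C-C)$. Hence $\caff(C)=X$ is equivalent to $\cspan(C-C)=X$, which, in a Hilbert space, is equivalent to $(C-C)^\perp=\big(\cspan(C-C)\big)^\perp=\{0\}$. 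This is the only computation in the argument, and it is routine.

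Granting this, I would proceed as follows. By \cref{p:opial}\cref{p:opial1}, the sequence $(x_n)_\nnn$ is bounded, so by weak sequential compactness of bounded subsets of a Hilbert space it has at least one weak cluster point. If $w_1$ and $w_2$ are any two weak cluster points of $(x_n)_\nnn$, then \cref{t:old} yields $w_1-w_2\in(C-C)^\perp=\{0\}$, so $w_1=w_2$. Thus $(x_n)_\nnn$ is a bounded sequence with exactly one weak cluster point, say $w$.

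To conclude, I would invoke the standard fact that a bounded sequence in a Hilbert space having a \emph{unique} weak cluster point converges weakly to it: if $x_n\not\weakly w$, some subsequence avoids a fixed weak neighbourhood of $w$, yet being bounded it has a weak cluster point, which is then a weak cluster point of $(x_n)_\nnn$ distinct from $w$ — a contradiction. Therefore $x_n\weakly w\in X$. I do not expect a real obstacle here; the only point requiring a moment's care is the equivalence $\caff(C)=X\iff(C-C)^\perp=\{0\}$ of the first paragraph, after which the result is an immediate assembly of \cref{p:opial}, \cref{t:old}, and weak sequential compactness. (Note that one cannot instead apply \cref{c:Opialweakchar} with the singleton $\{w\}$, since that would require knowing a priori that $\lim_n\|x_n-w\|$ exists, which is precisely the kind of statement the direct argument above supplies.)
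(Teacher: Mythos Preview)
Your proposal is correct and follows essentially the same approach as the paper: translate $\caff(C)=X$ into $(C-C)^\perp=\{0\}$, then invoke \cref{t:old} to force all weak cluster points to coincide. The paper's proof is a terse one-liner that leaves implicit both the affine/span computation you spell out and the passage from ``unique weak cluster point'' to ``weak convergence''; your version simply fills in those routine details.
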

\begin{proof}
By assumption, 
the closed linear span of $C-C$ is equal to entire space $X$. 
But then $(C-C)^\perp = X^\perp = \{0\}$ and so 
all weak cluster points of $(x_n)_\nnn$ must coincide 
by \cref{t:old}. 
\end{proof}

\begin{remark}
Assume that a sequence $(x_n)_\nnn$ in $X$ 
is Opial \wrt\ a nonempty subset $C$
of $X$ and that 
one of the following holds:
\begin{enumerate}
\item $\inte C\neq\varnothing$, or 
\item $X=\ell^2$ and $C=\ell^2_+$ (for which $\inte C=\varnothing$).
\end{enumerate}
Then $(x_n)_\nnn$ is weakly convergent because
each condition guarantees that $\caff C =X$ 
and \cref{c:largeC} applies. 
We also note that \cref{c:largeC} is known
in the context of 
\fejer\ monotoncity 
(see \cite[Proposition~3.7(iii)]{Comb01F}). 
\end{remark}

The next result is somewhat surprising: the Opial set of an
Opial sequence can be enlarged to a closed 
affine subspace!

\begin{proposition}
\label{p:enlarge}
Let $(x_n)_\nnn$ be a sequence in $X$ that is 
Opial \wrt\ a nonempty subset $C$ of $X$. 
Then $(x_n)_\nnn$ is also Opial \wrt\ $\caff C$.
\end{proposition}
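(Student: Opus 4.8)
The plan is to show that the set of points $c$ for which $\lim_n\|x_n-c\|$ exists is closed and affine, and then invoke that it contains $C$, hence contains $\caff C$. Concretely, introduce
\[
  E := \Menge{c\in X}{\lim_n\|x_n-c\|\text{ exists}}.
\]
By hypothesis $C\subseteq E$, so it suffices to prove that $E$ is a closed affine subset of $X$ (then $\caff C\subseteq E$, which is exactly the claim). Since $(x_n)_\nnn$ is bounded by \cref{p:opial}\cref{p:opial1}, all the relevant sequences below are bounded, so $\varlimsup$ and $\varliminf$ are finite and the manipulations are legitimate.

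\emph{Affinity.} Fix $c_0,c_1\in E$ and $\lambda\in\RR$, and set $c_\lambda := (1-\lambda)c_0+\lambda c_1$. Expand
\[
  \|x_n-c_\lambda\|^2 = (1-\lambda)\|x_n-c_0\|^2+\lambda\|x_n-c_1\|^2-\lambda(1-\lambda)\|c_0-c_1\|^2,
\]
which is the standard parallelogram/convexity identity for the squared norm and holds for every real $\lambda$ (not just $\lambda\in[0,1]$). Since $\lim_n\|x_n-c_0\|^2$ and $\lim_n\|x_n-c_1\|^2$ exist, the right-hand side converges, hence $\lim_n\|x_n-c_\lambda\|$ exists, i.e.\ $c_\lambda\in E$. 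Thus $E$ is affine. (In fact this already shows $E$ is affine without using boundedness; boundedness is only needed for the closedness step.)

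\emph{Closedness.} Let $c\in\overline{E}$ and let $(c_k)_\kkk$ in $E$ with $c_k\to c$. Pick $R$ with $\|x_n\|\le R$ for all $n$ and $\|c\|\le R$. For any $k$, the triangle inequality gives, for all $m,n$,
\[
  \big|\,\|x_n-c\|-\|x_m-c\|\,\big|
  \le \big|\,\|x_n-c_k\|-\|x_m-c_k\|\,\big| + 2\|c-c_k\|.
\]
Taking $\varlimsup$ over $m,n\to\infty$ and using that $\lim_n\|x_n-c_k\|$ exists (so the first term's limsup is $0$), we get $\varlimsup_{m,n}\big|\,\|x_n-c\|-\|x_m-c\|\,\big|\le 2\|c-c_k\|\to 0$ as $k\to\infty$; hence $(\|x_n-c\|)_\nnn$ is Cauchy and converges, i.e.\ $c\in E$. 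Therefore $E$ is closed and affine with $C\subseteq E$, so $\caff C\subseteq E$, which is the assertion.

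The only mild subtlety — not really an obstacle — is making sure the squared-norm identity is applied for \emph{all} real $\lambda$ so that one gets an affine set rather than merely a convex one; this is why I work with squares and the displayed identity rather than with the triangle inequality alone. Everything else is routine; boundedness from \cref{p:opial}\cref{p:opial1} keeps the limsup arguments clean.
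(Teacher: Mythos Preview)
Your proof is correct and follows essentially the same two-step strategy as the paper: a squared-norm identity to pass to affine combinations, and a triangle-inequality/Cauchy argument to pass to the closure. Your packaging via the set $E=\{c:\lim_n\|x_n-c\|\text{ exists}\}$ is a clean way to organize the same ideas; the paper instead extends $C$ first to $\aff C$ (using the multi-point version of your two-point identity) and then to its closure, but the content is the same. One small remark: the boundedness of $(x_n)_\nnn$ is not actually needed in either step of your argument (the Cauchy estimate works regardless), so the sentence introducing $R$ can be dropped.
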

\begin{proof}
For $c\in C$, set 
$\ell(c) := \lim_n\|x_n-c\|$
which is well defined by assumption.

\emph{Step 1:}
 Extension to the affine hull.\\
We first show that the Opial set $C$ can be 
extended to $\aff C$. 
Pick a finite index set $I$ and take $c_i\in C$ for every 
$i\in I$. Also, let each $\lambda_i$ be real, with
$\sum_{i\in I}\lambda_i=1$. 
Using \cite[Lemma~2.14(ii)]{BC2017},
we see that when $n\to\infty$, we obtain 
\begin{align*}
\Big\|x_n-\sum_{i\in I}\lambda_i c_i\Big\|^2
&= \Big\|\sum_{i\in I}\lambda_i(x_n-c_i)\Big\|^2\\
&=\sum_{i\in I}\lambda_i\|x_n-c_i\|^2
-\thalb\sum_{i\in I,\,j\in I}\lambda_i\lambda_j\|(x_n-c_i)-(x_n-c_j)\|^2\\
&=\sum_{i\in I}\lambda_i\|x_n-c_i\|^2
-\thalb\sum_{i\in I,\,j\in I}\lambda_i\lambda_j\|c_i-c_j\|^2\\
&\to\sum_{i\in I}\lambda_i\ell^2(c_i)
-\thalb\sum_{i\in I,\,j\in I}\lambda_i\lambda_j\|c_i-c_j\|^2.
\end{align*}
as claimed\footnote{In passing, we point out 
that 
$\ell^2(\sum_i\lambda_ic_i)
= \sum_{i\in I}\lambda_i\ell^2(c_i)
-\thalb\sum_{i\in I,\,j\in I}\lambda_i\lambda_j\|c_i-c_j\|^2$. 
}. 

\emph{Step {\color{black} 2}:} Extension to the closure.\\
Our proof follows along lines of the proof of 
\cite[Proposition~3.2(i)]{BBCIRS}.
Let $\overline{c}\in \overline{C}$ say 
$c_m\to \overline{c}$, where
each $c_m$ lies in $C$. 
The triangle inequality yields
 the key inequality 
\begin{equation*}
(\forall m\in\NN)(\forall \nnn)\quad
-\|c_m-\overline{c}\|\leq \|x_n-\overline{c}\|-
\|x_n-c_m\|\leq \|c_m-\overline{c}\|,
\end{equation*}
from which the result will readily follow: indeed, 
keeping $m$ fixed and taking liminf/limsup as $n\to\infty$,
 we deduce that 
\begin{align*}
-\|c_m-\overline{c}\|
&\leq \varliminf_n \|x_n-\overline{c}\|-\ell(c_m)
\leq \varlimsup_n \|x_n-\overline{c}\|-\ell(c_m)
\leq \|c_m-\overline{c}\|. 
\end{align*}
Letting now $m\to\infty$ shows that 
$\varliminf_n \|x_n-\overline{c}\|
=\varlimsup_n \|x_n-\overline{c}\|
= \lim_m \ell(c_m)$. 
Hence $(\|x_n-\overline{c}\|)_\nnn$ converges and 
$\ell(\overline{c})=\lim_n\|x_n-\overline{c}\|=
\lim_m \ell(c_m)$ exists\footnote{We note the 
``continuity property''
$c_m\to \overline{c}$ $\Rightarrow$ 
$\ell(c_m)\to \ell(\overline{c})$.}.  

Now combine \emph{Step~1} and
\emph{Step~2} to obtain the conclusion. 
\end{proof}

\begin{remark} 
Some comments on \cref{p:enlarge} are in order. 
\begin{enumerate}
\item 
For a nonempty subset $C$ of $X$, we have the equivalences:
\begin{subequations}
\begin{align}
\text{$(x_n)_\nnn$ is Opial \wrt\ $C$} 
&\Leftrightarrow 
\text{$(x_n)_\nnn$ is Opial \wrt\ $\overline{C}$} \\
&\Leftrightarrow 
\text{$(x_n)_\nnn$ is Opial \wrt\ $\caff C$.} 
\end{align}
\end{subequations}
% where $\caff C$ is the smallest closed affine subspace of $X$ containing $C$. 
\item 
The counterpart for \fejer\ monotone sequence states
that if $(x_n)_\nnn$ is \fejer\ monotone \wrt\ $C$, 
then $(x_n)_\nnn$ is also \fejer\ monotone \wrt\ $\cconv C$. 
This is part of the folklore; see, e.g., 
\cite[Lemma~2.1(ii)]{BKLW}. 
\end{enumerate}
\end{remark}

\begin{example}
Suppose that $X=\RR^2$, and define 
$(x_n)_\nnn := ((-1)^n,0)_\nnn$.
Clearly, $(x_n)_\nnn$ is Opial \wrt\ 
$C := \{0\}\times[-1,1]$. 
By \cref{p:enlarge}, $(x_n)_\nnn$ is 
Opial \wrt\ $\aff C = \{0\}\times\RR$. 
Note that there is no proper superset $D$ of 
$\{0\}\times\RR$ such that $(x_n)_\nnn$ is Opial
\wrt\ $D$. This illustrates the sharpness of \cref{p:enlarge}. 
\end{example}

In the following two results, we describe precisely the relationship between 
sequences that are Opial \wrt\ $X$, and weakly and strongly convergent 
sequences.

\begin{corollary}
\label{c:X-Opial=>weak}
Let $(x_n)_\nnn$ be a sequence in $X$ that {\color{blue} is} Opial \wrt\ $X$. 
Then $(x_n)_\nnn$ is weakly convergent. 
\end{corollary}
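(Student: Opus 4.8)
The plan is to recognize this as an immediate special case of the enlargement/largeness results already established. The cleanest route is to invoke \cref{c:largeC} with $C=X$: since $\caff(X)=X$ trivially, the hypothesis $\caff(C)=X$ of \cref{c:largeC} is satisfied, and the conclusion gives weak convergence of $(x_n)_\nnn$ to a point of $X$. So the entire proof is one line. An equally short alternative is to apply \cref{c:Opialweakchar} directly: every weak cluster point of $(x_n)_\nnn$ lies in $X=C$ by definition, hence the ``$\LA$'' implication of \cref{c:Opialweakchar} yields weak convergence.

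For completeness I would also trace back the underlying mechanism, which is \cref{t:old}: boundedness of the Opial sequence (\cref{p:opial}\cref{p:opial1}) gives the existence of weak cluster points, and \cref{t:old} forces any two of them $w_1,w_2$ to satisfy $w_1-w_2\in(C-C)^\perp=(X-X)^\perp=X^\perp=\{0\}$, so there is a unique weak cluster point; combined with boundedness this is exactly weak convergence. But since all of this is already packaged in \cref{c:largeC}, I would simply cite that result.

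There is no real obstacle here — the statement is a corollary precisely because the work has been done in \cref{t:old} and \cref{c:largeC}. The only thing to be slightly careful about is that ``Opial \wrt\ $X$'' is a genuinely nonvacuous hypothesis (it forces $\lim_n\|x_n-c\|$ to exist for \emph{every} $c\in X$, not just for $c$ in a proper subset), and it is this strength that makes $\caff(C)=X$ available. Conversely, \cref{c:X-Opial=>weak} is not an equivalence: a weakly convergent sequence need not be Opial \wrt\ $X$ (indeed, as the subsequent development shows, being Opial \wrt\ $X$ is strictly stronger), which is presumably why the ``following two results'' in the paper go on to pin down the exact relationship.

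\begin{proof}
By hypothesis, $(x_n)_\nnn$ is Opial \wrt\ $C:=X$, and clearly $\caff(X)=X$.
The conclusion therefore follows from \cref{c:largeC}.
\end{proof}
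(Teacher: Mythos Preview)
Your proof is correct. The paper's own proof is the one-line ``Clear from \cref{c:Opialweakchar} with $C=X$'', i.e.\ it takes the second of the two routes you identified (every weak cluster point trivially lies in $X$), whereas your formal proof uses the first route via \cref{c:largeC}; both are immediate specializations with $C=X$ and you already noted their equivalence.
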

\begin{proof}
Clear from \cref{c:Opialweakchar} with $C=X$. 
\end{proof}

\begin{proposition}
\label{p:weakvsX-Opial}
Let $(x_n)_\nnn$ be a weakly convergent sequence in $X$. 
Then $(x_n)_\nnn$ is Opial \wrt\ $X$
$\siff$ $(\|x_n\|)_\nnn$ is convergent.
\end{proposition}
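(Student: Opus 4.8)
The plan is to prove the equivalence directly from the definitions, using the identity that expands $\|x_n - c\|^2$ in terms of $\|x_n\|^2$, $\scal{x_n}{c}$, and $\|c\|^2$.

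First I would handle the direction ``$(x_n)_\nnn$ is Opial \wrt\ $X$ $\Rightarrow$ $(\|x_n\|)_\nnn$ is convergent'': this is immediate by taking $c = 0 \in X$ in the definition of an Opial sequence, which directly gives that $\lim_n \|x_n - 0\| = \lim_n \|x_n\|$ exists.

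For the converse, suppose $x_n \weakly x$ for some $x \in X$ and that $(\|x_n\|)_\nnn$ converges. Fix an arbitrary $c \in X$. Then
\begin{equation*}
\|x_n - c\|^2 = \|x_n\|^2 - 2\scal{x_n}{c} + \|c\|^2.
\end{equation*}
By weak convergence, $\scal{x_n}{c} \to \scal{x}{c}$; by hypothesis, $\|x_n\|^2 \to (\lim_n\|x_n\|)^2$; and $\|c\|^2$ is constant. Hence the right-hand side converges, so $(\|x_n - c\|^2)_\nnn$ converges, and therefore $\lim_n \|x_n - c\|$ exists. Since $c \in X$ was arbitrary, $(x_n)_\nnn$ is Opial \wrt\ $X$.

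There is essentially no obstacle here — the proof is a one-line expansion in each direction. The only point worth flagging is that the hypothesis ``$(x_n)_\nnn$ is weakly convergent'' is used essentially in the $\Leftarrow$ direction (to pin down $\lim_n \scal{x_n}{c}$), whereas in the $\Rightarrow$ direction it is not needed at all for the stated conclusion; one could alternatively remark that the $\Rightarrow$ direction of the equivalence combined with \cref{c:X-Opial=>weak} shows the weak-convergence hypothesis is automatic on that side. I would keep the proof minimal and simply carry out the two expansions.
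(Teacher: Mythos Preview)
Your proposal is correct and uses essentially the same approach as the paper: expand $\|x_n-c\|^2$ and use weak convergence to handle the inner-product term, reducing the Opial condition to convergence of $(\|x_n\|)_\nnn$. The paper packages both directions into a single identity (writing $\|x_n-x\|^2 = \|x_n\|^2 - \|\overline{x}\|^2 + \|\overline{x}-x\|^2 + 2\scal{\overline{x}-x_n}{x}$ with $x_n\weakly\overline{x}$), while you treat the two implications separately, but the content is identical.
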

\begin{proof}
Suppose $x_n\weakly \overline{x}$. Let's fix $x\in X$, write
\begin{equation*}
\|x_n-x\|^2 -\|x_n\|^2 = \|x\|^2 - 2\scal{x_n}{x}
\to \|x\|^2-2\scal{\overline{x}}{x},
\end{equation*}
which shows that 
% \begin{equation*}
% \|x_n-x\|^2 = \|x_n\|^2 - \|\overline{x}\|^2 + 
% \|\overline{x}-x\|^2 + 2\scal{\overline{x}-x_n}{x},
% \end{equation*}
$(x_n)_\nnn$ is Opial \wrt\ $X$
$\siff$ $\lim_n\|x_n\|$ exists 
($\siff$ $(x_n)_\nnn$ is Opial \wrt\ $\{0\}$).
\end{proof}

\begin{theorem}
\label{t:einaudi}
Let $(x_n)_\nnn$ be a sequence in $X$.
Then $(x_n)_\nnn$ is Opial \wrt\ $X$
$\siff$ $(x_n)_\nnn$ is weakly convergent 
and $(\|x_n\|)_\nnn$ is convergent. 
\end{theorem}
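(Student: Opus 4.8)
The plan is to prove the biconditional in \cref{t:einaudi} by chaining together the results already established. The forward implication ``$\Rightarrow$'' is the easier direction: assume $(x_n)_\nnn$ is Opial \wrt\ $X$. Then \cref{c:X-Opial=>weak} immediately gives that $(x_n)_\nnn$ is weakly convergent. Moreover, since $0\in X$, being Opial \wrt\ $X$ in particular forces $\lim_n\|x_n-0\|=\lim_n\|x_n\|$ to exist, so $(\|x_n\|)_\nnn$ is convergent. That settles ``$\Rightarrow$'' with essentially no work.

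For the reverse implication ``$\Leftarrow$'', assume $(x_n)_\nnn$ is weakly convergent and $(\|x_n\|)_\nnn$ is convergent. Weak convergence is the standing hypothesis of \cref{p:weakvsX-Opial}, and convergence of $(\|x_n\|)_\nnn$ is precisely the stated equivalent condition there, so \cref{p:weakvsX-Opial} yields directly that $(x_n)_\nnn$ is Opial \wrt\ $X$. Thus the whole theorem is a two-line consequence of \cref{c:X-Opial=>weak} and \cref{p:weakvsX-Opial}; one could also phrase ``$\Leftarrow$'' from scratch by fixing $x\in X$, writing (with $x_n\weakly\overline{x}$)
\begin{equation*}
\|x_n-x\|^2 = \|x_n\|^2 - \|\overline{x}\|^2 + \|\overline{x}-x\|^2 + 2\scal{\overline{x}-x_n}{x},
\end{equation*}
and noting that $\scal{\overline{x}-x_n}{x}\to 0$ while the remaining terms converge, but invoking \cref{p:weakvsX-Opial} is cleaner.

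There is no real obstacle here — \cref{t:einaudi} is a packaging statement that records the exact characterization of Opial-\wrt-$X$ sequences obtained by combining the two preceding results, so the only ``work'' is to make sure each direction correctly cites the hypothesis it needs: ``$\Rightarrow$'' uses Opiality both to get weak convergence (via \cref{c:X-Opial=>weak}) and, separately, to get convergence of the norms (by testing against $c=0$), while ``$\Leftarrow$'' feeds both of those facts as hypotheses into \cref{p:weakvsX-Opial}. I expect the proof to be three or four lines long.
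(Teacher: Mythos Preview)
Your proposal is correct and matches the paper's own proof, which likewise reduces both directions to \cref{c:X-Opial=>weak} and \cref{p:weakvsX-Opial}. The only cosmetic difference is that for ``$\Rightarrow$'' the paper cites \cref{p:weakvsX-Opial} to extract the convergence of $(\|x_n\|)_\nnn$, whereas you obtain it directly by testing the Opial condition at $c=0$; both are fine.
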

\begin{proof}
``$\RA$'': \cref{c:X-Opial=>weak} and \cref{p:weakvsX-Opial}.
``$\LA$'': \cref{p:weakvsX-Opial}.
\end{proof}

% \begin{theorem}
% \label{t:strong}
% Let $(x_n)_\nnn$ be a sequence in $X$, and let $x\in X$. 
% Then TFAE:
% \begin{enumerate}
% \item
% \label{t:strong1}
% $x_n\to x$. 
% \item
% \label{t:strong2}
% $(x_n)_\nnn$ is Opial \wrt\ $X$,
% {\color{blue} $x_n\weakly x$,} and $\|x_n\| \to \|x\|$. 
% \item
% \label{t:strong3}
% $x_n\weakly x$ and $\|x_n\|\to\|x\|$. 
% \end{enumerate}
% \end{theorem}
% \begin{proof}
% ``\cref{t:strong1}$\RA$\cref{t:strong2}'':
% Assume that $x_n\to x$. 
% Then 
% {\color{blue} $x_n\weakly x$. Moreover, }
% $(\forall c\in X)$ $x_n-c\to x-c$ and so $\|x_n-c\|\to\|x-c\|$; 
% in particular, $\|x_n\|\to\|x\|$. 
% ``\cref{t:strong2}$\RA$\cref{t:strong3}'': 
% This {\color{blue} is clear.} % follows from \cref{c:X-Opial=>weak}. 
% ``\cref{t:strong3}$\RA$\cref{t:strong1}'': 
% This is a property of Hilbert spaces known
% as the Radon-Riesz (or as the Kadets-Klee) property; 
% see, e.g., \cite[Section~29]{RN} or 
% \cite[Corollary~2.52]{BC2017}. 
% \end{proof}

% {\color{blue}

% \begin{remark}
% Consider \cref{t:strong}. 
% The implication \cref{t:strong2}$\Rightarrow$\cref{t:strong3} is false
% if we omit $x_n\weakly x$ in the hypothesis: indeed\footnote{{\color{blue}The first draft of this paper featured 
% this omission and we are grateful for a reviewer to pointing out this counterexample.}}, in $X=\RR$, the sequence $(x_n)_\nnn = (1)_\nnn$ 
% is Opial \wrt\ $\RR$ and $|x_n|\equiv|x|$ with $x=-1$; however, 
% $x_n\not\to x$. 
% \end{remark}
% }

\begin{corollary}
\label{c:einaudi}
Suppose that $X$ is finite-dimensional and 
let $(x_n)_\nnn$ be a sequence in $X$.
Then $(x_n)_\nnn$ is Opial \wrt\ $X$
$\siff$ $(x_n)_\nnn$ is convergent. 
\end{corollary}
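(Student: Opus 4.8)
The plan is to derive \cref{c:einaudi} directly from the previously established \cref{t:einaudi}, using the fact that in a finite-dimensional Hilbert space weak convergence coincides with strong (norm) convergence. First I would invoke \cref{t:einaudi}: a sequence $(x_n)_\nnn$ in $X$ is Opial \wrt\ $X$ if and only if $(x_n)_\nnn$ is weakly convergent and $(\|x_n\|)_\nnn$ is convergent. So it suffices to show that, under the standing assumption $\dim X<\pinf$, the condition ``$(x_n)_\nnn$ is weakly convergent and $(\|x_n\|)_\nnn$ is convergent'' is equivalent to ``$(x_n)_\nnn$ is convergent.''

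For the direction ``convergent $\RA$ Opial \wrt\ $X$'': if $x_n\to x$ strongly, then $x_n\weakly x$ and $\|x_n\|\to\|x\|$ by continuity of the norm, so the right-hand side of \cref{t:einaudi} holds. (Alternatively, one can cite \cref{t:strong} directly.) For the converse, suppose $(x_n)_\nnn$ is Opial \wrt\ $X$; by \cref{t:einaudi} it is weakly convergent, say $x_n\weakly x$. In finite dimensions, the weak and strong topologies agree — equivalently, all norms are equivalent and bounded sequences have convergent subsequences whose limit is pinned down by the weak limit — so $x_n\to x$ strongly. This gives convergence of $(x_n)_\nnn$, completing the equivalence.

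I would write the proof as essentially a one-line corollary: ``In a finite-dimensional Hilbert space, a sequence converges weakly if and only if it converges strongly; hence the claim is immediate from \cref{t:einaudi}, since strong convergence of $(x_n)_\nnn$ automatically forces $(\|x_n\|)_\nnn$ to converge.'' If one prefers to lean on \cref{t:strong} instead, note that weak convergence to $x$ plus finite-dimensionality already yields $\|x_n\|\to\|x\|$, so condition \cref{t:strong3} is met and strong convergence follows; combined with the trivial reverse implication this again gives the equivalence.

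There is essentially no obstacle here — the only point requiring the hypothesis $\dim X<\pinf$ is the coincidence of weak and strong convergence, which is standard (it fails in infinite dimensions, as the orthonormal-sequence example implicit in the earlier discussion shows, so the hypothesis is genuinely needed). The main care is simply to phrase the corollary so it is clear that it is \cref{t:einaudi} plus this single finite-dimensional fact, and perhaps to remark that finite-dimensionality is exactly what rules out counterexamples such as a weakly null orthonormal sequence, which is Opial \wrt\ $X$ but not norm-convergent.
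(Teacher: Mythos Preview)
Your proposal is correct and essentially matches the paper's proof: the paper also notes that weak and strong convergence coincide in finite dimensions, then invokes \cref{t:einaudi} for ``$\Rightarrow$'' and \cref{t:strong} for ``$\Leftarrow$'' (both of which you mention). The only cosmetic difference is that you lean primarily on \cref{t:einaudi} for both directions, which is fine since the relevant implication of \cref{t:strong} is itself immediate from \cref{t:einaudi}.
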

\begin{proof}
Recall that because $X$ is finite-dimensional, 
strong and weak convergence for sequences coincide. 
``$\RA$'': Clear from \cref{t:einaudi}. 
``$\LA$'': 
% Clear from \cref{t:strong}.
This is clear because every strongly convergent sequence is Opial 
\wrt\ $X$. 
\end{proof}

\begin{example}
\label{ex:einaudi}
Suppose that $X=\ell^2$ and 
let $(e_n)_\nnn$ be the sequence of standard unit vectors
in $X$. 
Then:
\begin{enumerate}
\item 
\label{ex:einaudi1}
$(e_n)_\nnn$ is Opial \wrt\ $X$, 
$e_n\weakly 0$; however, $e_n\not\to 0$ 
and $(e_n)_\nnn$ is \fejer\ only \wrt\ 
$\menge{c=(\gamma_n)\in X}{(\gamma_n)_\nnn\;\;\text{is increasing}}$. 
\item 
\label{ex:einaudi2}
$(x_n)_\nnn = (e_0,0,e_1,0,e_2,0,\ldots)$ 
converges weakly to $0$; however $(x_n)_\nnn$ is not 
Opial (and hence not \fejer\ monotone) 
\wrt\ any nonempty subset of $X$.  
\end{enumerate}
\end{example}
\begin{proof}
\cref{ex:einaudi1}: 
Clearly, $e_n\weakly 0$ and $\|e_n\|\equiv 1$.
By \cref{t:einaudi}, $(e_n)_\nnn$ is Opial 
\wrt\ $X$. 
Suppose that $(e_n)_\nnn$ is \fejer\ monotone
\wrt\ $\{c\}$, where $c=(\gamma_n)_\nnn \in X$. 
Then $(\forall\nnn)$
$\|e_{n+1}-c\|\leq\|e_n-c\|$
$\siff$
$\|e_{n+1}-c\|^2\leq\|e_n-c\|^2$
$\siff$
$1-2\scal{e_{n+1}}{c}+\|c\|^2 \leq 
1-2\scal{e_{n}}{c}+\|c\|^2$
$\siff$
$\scal{e_n-e_{n+1}}{c}\leq 0$
$\siff$
$\gamma_n-\gamma_{n+1}\leq 0$
$\siff$
$\gamma_n\leq\gamma_{n+1}$, as announced. 

\cref{ex:einaudi2}: 
Clearly, $x_n\weakly 0$. 
However, $(\|x_n\|)_\nnn = (1,0,1,0,1,0,\ldots)$ is not convergent. 
By \cref{t:einaudi}, $(x_n)_\nnn$ is not Opial \wrt\ $X$. 
Suppose to the contrary that $(x_n)_\nnn$ is Opial
\wrt\ $C$ and let $c\in C$.
Then $\ell := \lim_n\|x_n-c\|$ exists. 
Considering $(x_{2n+1})_\nnn$, we see that $\ell=\|c\|$. 
On the other hand, considering $(x_{2n})_\nnn$, we have
$\|c\|^2 \leftarrow \|x_{2n}-c\|^2 = 1+\|c\|^2 - 2\scal{x_{2n}}{c} \to 1+\|c\|^2$, which is absurd. 
\end{proof}

\begin{remark}
Let $(x_n)_\nnn$ be a sequence in $X$.
Then $(x_n)_\nnn$ is \fejer\ monotone
\wrt\ $X$ $\siff$ $(x_n)_\nnn = (x_0)_\nnn$ is a 
constant sequence. 
\cref{ex:einaudi}\cref{ex:einaudi1} illustrates 
a striking difference between Opial and \fejer\ monotone
sequences: if $(x_n)_\nnn$ is \fejer\ monotone \wrt\ a 
subset $C$ of $X$ with $\inte C\neq\varnothing$, 
then a result due to Ra\u{\i}k 
(see \cite{Raik} or \cite[Proposition~5.10]{BC2017}) implies 
that $(x_n)_\nnn$ converges 
strongly with the ``finite-length'' property
$\sum_n \|x_n-x_{n+1}\|<\pinf$. In contrast, the sequence 
$(e_n)_\nnn$ from \cref{ex:einaudi}\cref{ex:einaudi1}, 
which is Opial \wrt\ $X$, converges
only weakly! 
\end{remark}

\section{Opial sequences and projections onto Opial sets}

\label{s:three}

In this section, we study the behaviour of 
$(P_Cx_n)$ when $(x_n)_\nnn$ is a sequence in $X$
such that $(x_n)_\nnn$ is Opial \wrt\ the nonempty 
closed convex subset $C$ of $X$.
The closedness and convexity of $C$ is imposed because 
our analysis relies on the following 
concept, introduced by Edelstein \cite{Edel72} in 1972 
originally for analyzing nonexpansive mappings 
(see also \cite[Chapter~9]{GoeKirk}):

\begin{definition}[asymptotic center]
\label{d:ac}
Let $(u_n)_\nnn$ be a bounded sequence in $X$ 
and let $C$ be a nonempty closed convex subset of $X$.
Then there is a unique point $\widehat{c}\in C$ such that 
\begin{equation}
(\forall c\in C\smallsetminus\{\widehat{c}\})\quad
\varlimsup_n \|u_n-\widehat{c}\|<\varlimsup_n\|u_n-c\|.
\end{equation}
The point $\widehat{c}$ is the 
\emph{asymptotic center of $(u_n)_\nnn$ \wrt\ $C$}, 
and we 
write $\widehat{c} = A_C(u_n)_\nnn$ or, more succinctly, 
$A_C(u_{\NN})$. 
\end{definition}

We start with the following simple observation. 

\begin{lemma}
\label{l:simple1}
Let $(x_n)_\nnn$ be a sequence in $X$
that is Opial \wrt\ a 
nonempty closed convex subset $C$ of $X$.
Let $(x_{k_n})_\nnn$ be an arbitrary subsequence 
of $(x_n)_\nnn$.
Then these two sequences 
have the same asymptotic center \wrt\ $C$, i.e., 
$A_C(x_\NN) = A_C(x_{k_\NN})$.
\end{lemma}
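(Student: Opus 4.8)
The plan is to exploit the fact, established in \cref{t:old}, that an Opial sequence has the property that $\lim_n\|x_n-c\|$ exists for every $c\in C$ --- a property obviously inherited by any subsequence. Since the asymptotic center $\widehat{c}=A_C(x_\NN)$ is characterized (see \cref{d:ac}) purely in terms of the quantities $\varlimsup_n\|x_n-c\|$ for $c\in C$, the key observation is that for an Opial sequence these limsups are actually honest limits, and \emph{limits are preserved under passage to subsequences}. Thus the defining inequality for the asymptotic center transfers verbatim from $(x_n)_\nnn$ to $(x_{k_n})_\nnn$.

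Concretely, first I would note that $(x_n)_\nnn$ is bounded by \cref{p:opial}\cref{p:opial1}, hence so is $(x_{k_n})_\nnn$, so $A_C(x_{k_\NN})$ is well defined. Next, set $\widehat{c}:=A_C(x_\NN)$; by the Opial property, for every $c\in C$ we have $\varlimsup_n\|x_{k_n}-c\| = \lim_n\|x_{k_n}-c\| = \lim_n\|x_n-c\| = \varlimsup_n\|x_n-c\|$. Therefore, for every $c\in C\smallsetminus\{\widehat{c}\}$,
\begin{equation*}
\varlimsup_n\|x_{k_n}-\widehat{c}\| = \varlimsup_n\|x_n-\widehat{c}\| < \varlimsup_n\|x_n-c\| = \varlimsup_n\|x_{k_n}-c\|,
\end{equation*}
where the strict inequality in the middle is precisely the defining property of $\widehat{c}=A_C(x_\NN)$. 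By the uniqueness clause in \cref{d:ac}, this forces $A_C(x_{k_\NN})=\widehat{c}=A_C(x_\NN)$, which is the claim.

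There is no real obstacle here; the statement is essentially a triviality once one recognizes that the asymptotic center depends only on the function $c\mapsto\varlimsup_n\|x_n-c\|$ on $C$, and that for Opial sequences this function is unchanged when one passes to a subsequence (since each limsup is a genuine limit). The only point requiring a moment's care is making sure the well-definedness of $A_C(x_{k_\NN})$ is invoked before comparing it with $A_C(x_\NN)$, and that the uniqueness in \cref{d:ac} is what actually closes the argument rather than merely exhibiting $\widehat{c}$ as \emph{a} candidate center.
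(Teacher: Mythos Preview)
Your argument is correct and is essentially identical to the paper's: both observe that boundedness (via \cref{p:opial}\cref{p:opial1}) makes the asymptotic centers well defined, and that the Opial property forces $\varlimsup_n\|x_n-c\|=\lim_n\|x_n-c\|=\lim_n\|x_{k_n}-c\|=\varlimsup_n\|x_{k_n}-c\|$ for every $c\in C$, whence the two centers coincide. One minor correction: the existence of $\lim_n\|x_n-c\|$ for $c\in C$ is the \emph{definition} of being Opial (\cref{d:Os}), not something established in \cref{t:old}, which concerns the location of weak cluster points.
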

\begin{proof}
Recall that $(x_n)_\nnn$ is bounded by \cref{p:opial}\cref{p:opial1}. 
Let $c\in C$.
Because $(x_n)_\nnn$ is Opial \wrt\ $C$, 
$(\|x_n-c\|)_\nnn$ is convergent and 
$\lim_n\|x_n-c\|=\lim_n\|x_{k_n}-c\|$.
Thus 
$\varlimsup_n\|x_n-c\|=\varlimsup_n\|x_{k_n}-c\|$
and so the asymptotic centers coincide.
\end{proof}

We also have the following properties:

\begin{fact}[asymptotic center] 
{\rm (See \cite[Proposition~11.18]{BC2017} and 
\cite[Theorem~3.3.2]{Cegielski}.)} 
\label{f:ac}
Let $(u_n)_\nnn$ be a bounded sequence in $X$ and
let $C$ be a nonempty closed convex subset of $X$.
Set $f\colon X \to \RR\colon x\mapsto \varlimsup_n\|x-u_n\|^2$. 
Then the following hold:
\begin{enumerate}
\item
\label{f:ac1}
The function $f+\iota_C$ is supercoercive, and strongly convex with constant $2$. 
\item 
\label{f:ac2}
$A_C(u_\NN)$, 
the \emph{asymptotic center} of $(u_n)_\nnn$ \wrt\ $C$, 
is the unique minimizer of $f+\iota_C$. 
\item 
\label{f:ac3}
If $u_n\weakly u$, then 
$(\forall x\in X)$ 
$f(x)=\|x-u\|^2 + f(u)$ and $P_Cu = A_C(u_\NN)$. 
\item 
\label{f:ac4}
If $(u_n)_\nnn$ is \fejer\ monotone \wrt\ $C$, 
then $P_Cu_n\to A_C(u_\NN)$. 
\end{enumerate}
\end{fact}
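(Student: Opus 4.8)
The plan is to treat \cref{f:ac1} and \cref{f:ac2} together, deduce \cref{f:ac3} by a direct expansion, and prove \cref{f:ac4} by showing $(P_Cu_n)_\nnn$ is Cauchy and pinning down its limit via the strict-minimality property in \cref{d:ac}. For \cref{f:ac1}, I would apply the identity $\norm{(1-\alpha)a+\alpha b}^2=(1-\alpha)\norm{a}^2+\alpha\norm{b}^2-\alpha(1-\alpha)\norm{a-b}^2$ with $a=x-u_n$ and $b=y-u_n$ and take $\varlimsup_n$: since $\alpha(1-\alpha)\norm{x-y}^2$ is independent of $n$, this yields $f((1-\alpha)x+\alpha y)\le(1-\alpha)f(x)+\alpha f(y)-\alpha(1-\alpha)\norm{x-y}^2$, i.e.\ $f$ is strongly convex with constant $2$, and adding the convex $\iota_C$ preserves this. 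Boundedness of $(u_n)_\nnn$ gives $M:=\sup_\nnn\norm{u_n}<\pinf$ and $f(x)\ge(\norm{x}-M)^2$ whenever $\norm{x}\ge M$, so $f(x)/\norm{x}\to\pinf$; since $\iota_C\ge 0$, the sum $f+\iota_C$ is supercoercive. I would also record here that $f$ is finite on $X$ and bounded above on every ball (by $(\norm{x}+M)^2$), hence continuous — a finite convex function that is locally bounded above is continuous — so that $f+\iota_C$ is proper, convex, and lower semicontinuous.

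For \cref{f:ac2}: $f+\iota_C$ is proper, lsc, convex, and coercive on a Hilbert space, so a minimizing sequence is bounded, admits a weak cluster point, and — a convex lsc function being weakly lsc — that cluster point is a minimizer; strong convexity makes it the \emph{unique} minimizer, in fact a strict one. Since $t\mapsto\sqrt t$ is increasing on $\RPX$, minimizing $f+\iota_C$ over $X$ amounts to strictly minimizing $c\mapsto\varlimsup_n\norm{u_n-c}$ over $C$, which is exactly the characterizing property of $\widehat c=A_C(u_\NN)$ in \cref{d:ac}; hence $\widehat c$ is the unique minimizer of $f+\iota_C$. For \cref{f:ac3}, assume $u_n\weakly u$ and expand $\norm{x-u_n}^2=\norm{x-u}^2-2\scal{x-u}{u_n-u}+\norm{u_n-u}^2$; since $\scal{x-u}{u_n-u}\to 0$, taking $\varlimsup_n$ gives $f(x)=\norm{x-u}^2+f(u)$ for every $x\in X$. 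Then $f+\iota_C$ and $\norm{\cdot-u}^2+\iota_C$ share the same unique minimizer, namely $P_Cu$, which by \cref{f:ac2} equals $A_C(u_\NN)$.

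For \cref{f:ac4}, write $p_n:=P_Cu_n$. \fejer\ monotonicity with $c=p_n\in C$ gives $\norm{u_{n+1}-p_n}\le\norm{u_n-p_n}=d_C(u_n)$, hence $d_C(u_{n+1})\le d_C(u_n)$ and $d_C(u_n)\downarrow\ell$ for some $\ell\ge 0$. For $n\le m$, since $\thalb(p_n+p_m)\in C$ the parallelogram law yields $d_C(u_m)^2\le\thalb\norm{u_m-p_n}^2+\thalb d_C(u_m)^2-\tfrac14\norm{p_n-p_m}^2$, while iterating \fejer\ monotonicity (again with $c=p_n$) gives $\norm{u_m-p_n}\le\norm{u_n-p_n}=d_C(u_n)$; combining, $\norm{p_n-p_m}^2\le 2\big(d_C(u_n)^2-d_C(u_m)^2\big)\to 0$, so $(p_n)_\nnn$ is Cauchy and $p_n\to\bar p$ for some $\bar p\in C$. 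To identify $\bar p$: from $\norm{u_n-\bar p}\le d_C(u_n)+\norm{p_n-\bar p}$ and $d_C(u_n)\le\norm{u_n-\widehat c}$ I obtain $\varlimsup_n\norm{u_n-\bar p}\le\ell\le\varlimsup_n\norm{u_n-\widehat c}$; since $\widehat c$ is the \emph{strict} minimizer over $C$ of $c\mapsto\varlimsup_n\norm{u_n-c}$ by \cref{d:ac}, this forces $\bar p=\widehat c$.

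The step I expect to need the most care is \cref{f:ac1} and \cref{f:ac2}: one must not take lower semicontinuity of $f$ for granted, since a $\varlimsup$ of continuous functions need not be lsc — hence the explicit detour through continuity of the finite convex function $f$ — and one must check that the abstract unique minimizer of $f+\iota_C$ genuinely coincides with the point singled out by the strict inequalities defining the asymptotic center in \cref{d:ac}. Everything else is routine Hilbert-space computation.
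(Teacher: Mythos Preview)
Your proof is correct in all four parts. Note, however, that the paper does \emph{not} supply its own proof of this statement: it is recorded as a Fact with pointers to \cite[Proposition~11.18]{BC2017} and \cite[Theorem~3.3.2]{Cegielski}, and is used as a black box in the rest of \cref{s:three}. So there is no ``paper's proof'' to compare against beyond those external references.

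That said, your self-contained argument is sound. The strong-convexity computation in \cref{f:ac1} is the standard one, and your care about lower semicontinuity of $f$ (via finiteness and local boundedness of a convex function) is exactly the subtlety one must address, since a pointwise $\varlimsup$ of continuous functions is in general only \emph{upper} semicontinuous. Your proof of \cref{f:ac4} --- monotonicity of $(d_C(u_n))_\nnn$, the parallelogram-law Cauchy estimate $\norm{p_n-p_m}^2\le 2\big(d_C(u_n)^2-d_C(u_m)^2\big)$, and identification of the limit via the strict-minimality clause in \cref{d:ac} --- matches the classical argument and is cleanly executed.
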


\begin{corollary}
\label{c:simple2}
Let $C$ be a nonempty closed convex subset of $X$, 
and let $(c_n)_\nnn$ be a sequence in $C$ 
that is Opial \wrt\ $C$.
Then $c_n\weakly A_C(c_\NN)$. 
\end{corollary}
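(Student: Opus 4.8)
The plan is to show that the sequence $(c_n)_\nnn$, which lies in $C$ and is Opial \wrt\ $C$, is weakly convergent with limit equal to its asymptotic center \wrt\ $C$. First I would invoke \cref{c:X-Opial=>weak}: since $C$ is convex and $(c_n)_\nnn$ is Opial \wrt\ $C$, we may pass to the closed affine hull via \cref{p:enlarge} — actually the cleaner route is to observe that $(c_n)_\nnn$ is Opial \wrt\ $C$, but I want weak convergence, so I would instead argue as follows. By \cref{p:opial}\cref{p:opial1} the sequence is bounded. Let $w$ be any weak cluster point of $(c_n)_\nnn$, say $c_{k_n}\weakly w$. Since each $c_{k_n}\in C$ and $C$ is closed and convex, hence weakly closed, we get $w\in C$. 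Thus every weak cluster point of $(c_n)_\nnn$ lies in $C$, and \cref{c:Opialweakchar} yields that $(c_n)_\nnn$ converges weakly to some point $u\in C$.

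It remains to identify $u$ with $A_C(c_\NN)$. Here I would apply \crefpart{f:ac}{f:ac3}: with $f\colon x\mapsto \varlimsup_n\|x-c_n\|^2$ and $c_n\weakly u$, that fact gives $P_Cu = A_C(c_\NN)$. But $u\in C$, so $P_Cu = u$, and therefore $u = A_C(c_\NN)$, which is exactly the claim $c_n\weakly A_C(c_\NN)$.

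The only point requiring care — and the main obstacle, though a mild one — is making sure the hypotheses of \crefpart{f:ac}{f:ac3} are met: we need $(c_n)_\nnn$ bounded (true, by \cref{p:opial}\cref{p:opial1}) and weakly convergent (established in the first paragraph), so the fact applies verbatim. One should also note that the asymptotic center $A_C(c_\NN)$ is well defined precisely because the sequence is bounded and $C$ is nonempty closed convex, as in \cref{d:ac}. Everything else is a direct assembly of the cited results, so no substantial new computation is needed.
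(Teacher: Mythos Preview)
Your proof is correct, but it follows a slightly different route from the paper's. You first observe that every weak cluster point lies in $C$ (by weak closedness of the closed convex set $C$), then invoke \cref{c:Opialweakchar} to obtain weak convergence $c_n\weakly u\in C$, and finally apply \cref{f:ac}\cref{f:ac3} to the \emph{full} sequence to identify $u=P_Cu=A_C(c_\NN)$. The paper instead takes an arbitrary weak cluster point $\overline{c}$, applies \cref{f:ac}\cref{f:ac3} to the \emph{subsequence} to get $\overline{c}=A_C(c_{k_\NN})$, and then uses \cref{l:simple1} (Opial sequences and their subsequences share the same asymptotic center) to conclude $\overline{c}=A_C(c_\NN)$; since every weak cluster point is this fixed point, weak convergence follows. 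Your approach trades \cref{l:simple1} for \cref{c:Opialweakchar}, which is arguably cleaner here since it lets you apply \cref{f:ac}\cref{f:ac3} once to the whole sequence rather than juggling subsequence asymptotic centers; the paper's route, on the other hand, is more self-contained at this point in the exposition since \cref{l:simple1} appears immediately before the corollary.
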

\begin{proof}
By \cref{p:opial}\cref{p:opial1}, 
the sequence $(c_n)_\nnn$ is bounded. 
Let $\overline{c}\in C$ be a weak cluster point of 
$(c_n)_\nnn$, say $c_{k_n}\weakly \overline{c}$. 
On the one hand, \cref{f:ac}\cref{f:ac3} yields
$\overline{c}=P_C\overline{c} = A_C(c_{k_\NN})$. 
On the other hand, $A_C(c_\NN) = A_C(c_{k_\NN})$ 
by \cref{l:simple1}. 
Altogether, $\overline{c}=A_C(c_\NN)$ which yields the result.
\end{proof}

\begin{theorem}
\label{t:dto0}
Let $(x_n)_\nnn$ be a sequence in $X$ that is 
Opial \wrt\ a nonempty closed convex subset $C$ of $X$ and
such that $d_C(x_n)\to 0$. 
Then the asymptotic centers 
of $(x_n)_\nnn$ and $(P_Cx_n)_\nnn$ \wrt\ $C$ coincide, and 
$P_Cx_n\weakly A_C(x_\NN)$.  % = A_C(x_{k_\NN})$.
\end{theorem}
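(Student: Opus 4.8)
The plan is to first reduce the claim to a statement about $(P_C x_n)_\nnn$ being Opial with respect to $C$, then apply \cref{c:simple2}, and finally reconcile the asymptotic centers. Since $d_C(x_n)\to 0$, we have $\|x_n - P_C x_n\|\to 0$, so for every $c\in C$ the triangle inequality gives $\big|\,\|P_C x_n - c\| - \|x_n - c\|\,\big|\leq \|P_C x_n - x_n\| \to 0$. Because $(x_n)_\nnn$ is Opial with respect to $C$, $\lim_n\|x_n-c\|$ exists, and hence $\lim_n\|P_C x_n - c\|$ exists and equals $\lim_n\|x_n-c\|$. Thus $(P_C x_n)_\nnn$ is a sequence in $C$ that is Opial with respect to $C$.

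Next I would invoke \cref{c:simple2} applied to $(c_n)_\nnn := (P_C x_n)_\nnn$, which yields $P_C x_n \weakly A_C\big((P_C x_n)_\nnn\big)$. It remains to identify this asymptotic center with $A_C(x_\NN)$. By \cref{f:ac}\cref{f:ac2}, the asymptotic center with respect to $C$ is the unique minimizer of $x\mapsto \varlimsup_n\|x-u_n\|^2 + \iota_C(x)$; since $\|P_C x_n - c\|$ and $\|x_n - c\|$ have the same limit for each $c\in C$ (in particular the same $\varlimsup$), the two functions $\varlimsup_n\|\cdot - x_n\|^2 + \iota_C$ and $\varlimsup_n\|\cdot - P_C x_n\|^2 + \iota_C$ coincide on $C$, so they have the same minimizer: $A_C\big((P_C x_n)_\nnn\big) = A_C(x_\NN)$. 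This gives $P_C x_n \weakly A_C(x_\NN)$, and also proves the first assertion that the asymptotic centers of $(x_n)_\nnn$ and $(P_C x_n)_\nnn$ with respect to $C$ coincide. Finally, the equality $A_C(x_\NN) = A_C(x_{k_\NN})$ for an arbitrary subsequence is exactly \cref{l:simple1}.

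I do not anticipate a serious obstacle here: the only mild subtlety is making sure that "same limit for each $c\in C$" transfers cleanly to "same $\varlimsup$ of the squared distances over $C$", which it does since the limit of the squares exists and equals the square of the limit. Everything else is a direct chaining of \cref{c:simple2}, \cref{f:ac}\cref{f:ac2}, and \cref{l:simple1}.
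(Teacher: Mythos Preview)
Your proposal is correct and follows essentially the same approach as the paper: show that $(P_Cx_n)_\nnn$ is Opial \wrt\ $C$ with $\lim_n\|P_Cx_n-c\|=\lim_n\|x_n-c\|$ for every $c\in C$, deduce that the asymptotic centers coincide, and then apply \cref{c:simple2}. The only cosmetic difference is that you obtain the equality of limits via the triangle inequality $\big|\,\|P_Cx_n-c\|-\|x_n-c\|\,\big|\leq\|x_n-P_Cx_n\|\to 0$, whereas the paper expands $\|x_n-c\|^2$ via $\|x_n-P_Cx_n\|^2+\|P_Cx_n-c\|^2+2\scal{x_n-P_Cx_n}{P_Cx_n-c}$; both arguments are equivalent here.
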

\begin{proof}
By \cref{p:opial}\cref{p:opial1}, $(x_n)_\nnn$ is bounded.
Because $P_C$ is nonexpansive, it follows that 
$(P_Cx_n)_\nnn$ is bounded as well.
Because $x_n-P_Cx_n\to 0$, 
 we see that the inner product term in 
\begin{equation*}
(\forall c\in C)(\forall\nnn)\quad
\|x_n-c\|^2 = \|x_n-P_Cx_n\|^2 + \|P_Cx_n-c\|^2 
+ 2\scal{x_n-P_Cx_n}{P_Cx_n-c}
\end{equation*}
is controlled and goes to $0$. 
Now $(\|x_n-c\|^2)_\nnn$ converges, 
 and therefore 
$(\|P_Cx_n-c\|^2)_\nnn$ converges as well; 
moreover, 
\begin{equation}
(\forall c\in C)\quad \lim_n \|x_n-c\| = \lim_n\|P_Cx_n-c\|.
\end{equation}
This implies that $A_C(x_\NN) = A_C(P_Cx_\NN)$
and that $(P_Cx_n)_\nnn$ is Opial \wrt\ $C$. 
By \cref{c:simple2}, $P_Cx_n\weakly A_C(P_Cx_\NN)$
and we are done.
\end{proof}

\begin{remark}
Let us contrast \cref{t:dto0} with its \fejer\ counterpart:
if $(x_n)_\nnn$ is \fejer\ monotone \wrt\ a nonempty
closed convex subset $C$ of $X$, then
\cref{f:ac}\cref{f:ac4} yields the strong convergent statement 
$P_Cx_n\to A_C(x_\NN)$ even without the assumption 
that $d_C(x_n)\to 0$. 
On the other hand, \cref{ex:einaudi}\cref{ex:einaudi1} 
illustrates that the convergence in 
\cref{t:dto0} may fail to be strong: $P_Xe_n = e_n\weakly 0$ 
but $e_n\not\to 0$. 
Hence \cref{t:dto0} is sharp in the sense that 
it is impossible to obtain strong convergence of $(P_Cx_n)_\nnn$ even when $d_C(x_n)\to 0$. 
\end{remark}

\begin{proposition}
\label{p:badlung}
Let $(x_n)_\nnn$ be a sequence in $X$ that is 
Opial \wrt\ a nonempty closed convex subset $C$ of $X$.
Then all strong cluster points of $(P_Cx_n)_\nnn$
coincide with $A_C(x_\NN)$. 
\end{proposition}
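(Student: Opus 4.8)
The plan is to fix a strong cluster point $p$ of $(P_Cx_n)_\nnn$ and to show directly that $p=A_C(x_\NN)$, by comparing the two limiting distances $\lim_n\|x_n-p\|$ and $\lim_n\|x_n-A_C(x_\NN)\|$. Write $\widehat c:=A_C(x_\NN)$, which is well defined because $(x_n)_\nnn$ is bounded by \cref{p:opial}\cref{p:opial1}. Say $P_Cx_{k_n}\to p$; since $C$ is closed, $p\in C$. Because $(x_n)_\nnn$ is Opial \wrt\ $C$, the limits $\ell(p):=\lim_n\|x_n-p\|$ and $\ell(\widehat c):=\lim_n\|x_n-\widehat c\|$ exist. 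I would then argue by contradiction, assuming $p\neq\widehat c$.

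First I would squeeze a limit out of the Opial property: expanding
\begin{equation*}
\|x_{k_n}-\widehat c\|^2-\|x_{k_n}-p\|^2 = 2\scal{x_{k_n}}{p-\widehat c}+\|\widehat c\|^2-\|p\|^2,
\end{equation*}
the left-hand side converges to $\ell(\widehat c)^2-\ell(p)^2$, so $L:=\lim_n\scal{x_{k_n}}{p-\widehat c}$ exists and $\ell(\widehat c)^2-\ell(p)^2 = 2L+\|\widehat c\|^2-\|p\|^2$. Next I would lower-bound $L$ using the projection inequality $\scal{x_{k_n}-P_Cx_{k_n}}{\widehat c-P_Cx_{k_n}}\leq 0$: splitting off the cross term $\scal{x_{k_n}-P_Cx_{k_n}}{p-P_Cx_{k_n}}$, which tends to $0$ because $(x_{k_n}-P_Cx_{k_n})_\nnn$ is bounded and $p-P_Cx_{k_n}\to 0$, one obtains $\varliminf_n\scal{x_{k_n}-P_Cx_{k_n}}{p-\widehat c}\geq 0$; adding $\scal{P_Cx_{k_n}}{p-\widehat c}\to\scal{p}{p-\widehat c}$ yields $L\geq\scal{p}{p-\widehat c}$. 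Substituting this into the displayed identity and simplifying gives
\begin{equation*}
\ell(\widehat c)^2-\ell(p)^2 \geq \|p-\widehat c\|^2 > 0,
\end{equation*}
and hence $\ell(\widehat c)>\ell(p)$.

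This contradicts the defining property of the asymptotic center: applying \cref{d:ac} to $p\in C\smallsetminus\{\widehat c\}$ gives $\varlimsup_n\|x_n-\widehat c\| < \varlimsup_n\|x_n-p\|$, and since both limits superior are genuine limits by the Opial property, this reads $\ell(\widehat c)<\ell(p)$, a contradiction. Therefore $p=\widehat c$, which is the claim. The step I expect to require the most care is the lower bound on $L$: the subsequence $(x_{k_n})_\nnn$ itself need not converge — only $(P_Cx_{k_n})_\nnn$ does — so the projection inequality only delivers a one-sided ($\varliminf$) estimate, and one must invoke the Opial property first to know that $L$ is an honest limit before that estimate can be fed back into the identity.
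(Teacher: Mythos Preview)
Your proof is correct, but it is considerably more elaborate than the paper's. The paper argues in two lines via the triangle inequality and the nearest-point property of $P_C$: for any $c\in C$,
\[
\|x_{k_n}-\overline{c}\|\leq \|x_{k_n}-P_Cx_{k_n}\|+\|P_Cx_{k_n}-\overline{c}\|\leq \|x_{k_n}-c\|+\|P_Cx_{k_n}-\overline{c}\|,
\]
and letting $n\to\infty$ (using Opial to pass to full-sequence limits) gives $\ell(\overline{c})\leq\ell(c)$ for every $c\in C$, so $\overline{c}$ is the minimizer defining $A_C(x_\NN)$. Your route instead works with squared norms and the obtuse-angle characterization $\scal{x-P_Cx}{c-P_Cx}\leq 0$, ultimately producing the sharper quantitative bound $\ell(\widehat c)^2-\ell(p)^2\geq\|p-\widehat c\|^2$. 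That extra information is nice (and in fact lets you conclude $p=\widehat c$ directly, without the detour through contradiction: combine it with $\ell(\widehat c)\leq\ell(p)$ to force $\|p-\widehat c\|=0$), but the paper's argument gets to the conclusion with much less machinery.
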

\begin{proof}
Suppose that $P_{C}x_{k_n}\to\overline{c}$. 
Then 
$(\forall c\in C)$
$\|x_{k_n}-\overline{c}\|
\leq 
\|x_{k_n}-P_Cx_{k_n}\|
+\|P_Cx_{k_n}-\overline{c}\|
\leq 
\|x_{k_n}-c\|
+\|P_Cx_{k_n}-\overline{c}\|$. 
Because $(x_n)_\nnn$ is Opial \wrt\ $C$, we learn 
{\color{black} with the help of \cref{l:simple1}}  
that 
\begin{align*}
(\forall c\in C)\;\;
\lim_n\|x_n-\overline{c}\|
= 
\lim_n\|x_{k_n}-\overline{c}\|
\leq 
\lim_n\|x_{k_n}-c\|
= 
\lim_n\|x_{n}-c\|;
\end{align*}
therefore, 
$\overline{c}=A_C(x_\NN)$. 
\end{proof}

\begin{corollary}
\label{c:badlung}
Let $(x_n)_\nnn$ be a sequence in $X$ that is 
Opial \wrt\ a nonempty closed convex subset $C$ of $X$.
Suppose that $\{P_Cx_n\}_\nnn$ is relatively compact.
Then  $P_Cx_n\to A_C(x_\NN)$. 
\end{corollary}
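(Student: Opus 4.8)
The plan is to combine \cref{p:badlung} with the elementary topological fact that a relatively compact sequence possessing a unique cluster point must converge to that point. Since, by \cref{p:badlung}, every strong cluster point of $(P_Cx_n)_\nnn$ already equals $A_C(x_\NN)$, the relative compactness hypothesis is exactly what is needed to upgrade ``all cluster points agree'' to ``the sequence converges''.

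Concretely, I would first set $\widehat{c} := A_C(x_\NN)$, which is well defined because $(x_n)_\nnn$ is bounded by \cref{p:opial}\cref{p:opial1} (and $(P_Cx_n)_\nnn$ is bounded as well, $P_C$ being nonexpansive). Then argue by contradiction: if $(P_Cx_n)_\nnn$ does not converge strongly to $\widehat{c}$, there exist $\varepsilon>0$ and a subsequence $(P_Cx_{k_n})_\nnn$ with $\|P_Cx_{k_n}-\widehat{c}\|\geq\varepsilon$ for all $\nnn$. By relative compactness of $\{P_Cx_n\}_\nnn$, this subsequence has a further subsequence $(P_Cx_{l_n})_\nnn$ converging strongly to some $\overline{c}\in X$; since $C$ is closed, $\overline{c}\in C$, and passing to the limit gives $\|\overline{c}-\widehat{c}\|\geq\varepsilon$, hence $\overline{c}\neq\widehat{c}$. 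But $\overline{c}$ is a strong cluster point of the full sequence $(P_Cx_n)_\nnn$, so \cref{p:badlung} forces $\overline{c}=A_C(x_\NN)=\widehat{c}$, a contradiction. Therefore $P_Cx_n\to\widehat{c}=A_C(x_\NN)$.

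I do not anticipate a genuine obstacle here: the argument is a short deduction, and the only point requiring a little care is correctly extracting the strongly convergent sub-subsequence from the relative compactness hypothesis and observing that its limit is a strong cluster point of the entire sequence $(P_Cx_n)_\nnn$, which is precisely the hypothesis under which \cref{p:badlung} applies. (This also neatly parallels \cref{f:ac}\cref{f:ac4}: in the \fejer\ case strong convergence of the projections is automatic, whereas in the Opial case relative compactness is what restores it.)
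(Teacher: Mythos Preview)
Your proposal is correct and follows essentially the same approach as the paper, which simply writes ``Clear from \cref{p:badlung}.'' You have merely spelled out in detail the standard topological argument (relatively compact sequence with a unique cluster point must converge) that the paper leaves implicit.
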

\begin{proof}
Clear from \cref{p:badlung}. 
\end{proof}

\begin{corollary}
Let $(x_n)_\nnn$ be a sequence in $X$ that is 
Opial \wrt\ a nonempty closed convex subset $C$ of $X$.
Suppose that 
$\{x_n\}_\nnn$ is relatively compact, that 
$C$ is boundedly compact\footnote{Recall that 
$C$ is \emph{boundedly compact} if the intersection of $C$ with every closed ball 
is compact.}, or that 
$X$ is finite-dimensional.
Then  $P_Cx_n\to A_C(x_\NN)$. 
\end{corollary}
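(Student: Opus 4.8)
The plan is to reduce all three cases to \cref{c:badlung}, which already delivers $P_Cx_n\to A_C(x_\NN)$ as soon as $\{P_Cx_n\}_\nnn$ is relatively compact (and note that $A_C(x_\NN)$ is well-defined here, since $(x_n)_\nnn$ is bounded by \cref{p:opial}\cref{p:opial1} and $C$ is nonempty closed convex). So the entire task is to check, under each of the three stated hypotheses, that the projected sequence $(P_Cx_n)_\nnn$ has relatively compact range.

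First I would record the two standing facts used throughout: by \cref{p:opial}\cref{p:opial1} the sequence $(x_n)_\nnn$ is bounded, and since $P_C$ is nonexpansive (hence $1$-Lipschitz, in particular continuous) the sequence $(P_Cx_n)_\nnn$ is bounded as well. Now the cases. If $\{x_n\}_\nnn$ is relatively compact, then $K := \overline{\{x_n\}_\nnn}$ is compact; continuity of $P_C$ makes $P_C(K)$ compact, and $\{P_Cx_n\}_\nnn\subseteq P_C(K)$, so $\{P_Cx_n\}_\nnn$ is relatively compact. If $C$ is boundedly compact, pick a closed ball $B$ with $\{P_Cx_n\}_\nnn\subseteq B$ (possible since $(P_Cx_n)_\nnn$ is bounded); then $\{P_Cx_n\}_\nnn\subseteq C\cap B$, which is compact by hypothesis, so again $\{P_Cx_n\}_\nnn$ is relatively compact. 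Finally, if $X$ is finite-dimensional, bounded subsets are relatively compact, so the boundedness of $(P_Cx_n)_\nnn$ gives the conclusion directly; alternatively, in finite dimensions $C$ is automatically boundedly compact, which reduces this to the previous case. In each of the three cases, \cref{c:badlung} applies and yields $P_Cx_n\to A_C(x_\NN)$.

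I do not anticipate a genuine obstacle: the statement is essentially a packaging of \cref{c:badlung} with three standard sufficient conditions for relative compactness of $(P_Cx_n)_\nnn$. The only points worth a moment's care are that one should invoke the boundedness of $(x_n)_\nnn$ — and hence of $(P_Cx_n)_\nnn$ — before appealing to bounded compactness of $C$, and that it is continuity (not merely nonexpansiveness) of $P_C$ that is actually used in the first case.
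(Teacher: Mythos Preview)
Your proposal is correct and follows exactly the paper's approach: the paper's proof is the one-line observation that each of the three assumptions forces $\{P_Cx_n\}_\nnn$ to be relatively compact, after which \cref{c:badlung} gives the conclusion. Your write-up simply fills in the routine verifications of relative compactness that the paper leaves implicit.
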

\begin{proof} 
Each assumption implies that 
$\{P_Cx_n\}_\nnn$ is relatively compact, and the 
conclusion thus follows from  \cref{c:badlung}. 
\end{proof}

Provided a relative compactness condition holds, 
we now have a result that sharpens 
\cref{p:opial}\cref{p:opial2}: 

\begin{proposition}
\label{p:saturday}
Let $(x_n)_\nnn$ be a sequence in $X$ that is 
Opial \wrt\ a nonempty closed convex subset $C$ of $X$.
Suppose that 
$\{P_Cx_n\}_\nnn$ is relatively compact. 
Then $(d_C(x_n))_\nnn$ is convergent and 
$\lim_n d_C(x_n) = \lim_n \|x_n-A_C(x_\NN)\|
= \min_{c\in C}\lim_n \|x_n-c\|$. 
\end{proposition}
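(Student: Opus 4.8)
The plan is to assemble this from \cref{c:badlung} and \cref{p:opial}\cref{p:opial2}, with the asymptotic center $\widehat{c}:=A_C(x_\NN)$ playing the role of the optimal point. First I would note that, since $\{x_n\}_\nnn$ is relatively compact and $P_C$ is nonexpansive (hence continuous), the set $\{P_Cx_n\}_\nnn$ is relatively compact as well: it is contained in $P_C\big(\overline{\{x_n\}_\nnn}\big)$, the continuous image of a compact set, which is compact. Therefore \cref{c:badlung} applies and gives $P_Cx_n\to\widehat{c}$.

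Second, because $(x_n)_\nnn$ is Opial \wrt\ $C$ and $\widehat{c}\in C$, the limit $\lim_n\|x_n-\widehat{c}\|$ exists. The triangle inequality then yields
\[
(\forall\nnn)\quad
\big|\,d_C(x_n)-\|x_n-\widehat{c}\|\,\big|
=\big|\,\|x_n-P_Cx_n\|-\|x_n-\widehat{c}\|\,\big|
\leq\|P_Cx_n-\widehat{c}\|,
\]
whose right-hand side tends to $0$; hence $(d_C(x_n))_\nnn$ converges and $\lim_n d_C(x_n)=\lim_n\|x_n-\widehat{c}\|$.

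Finally, I would invoke \cref{p:opial}\cref{p:opial2}: since $(d_C(x_n))_\nnn$ converges,
\[
\lim_n d_C(x_n)=\varlimsup_n d_C(x_n)\leq\inf_{c\in C}\lim_n\|x_n-c\|\leq\lim_n\|x_n-\widehat{c}\|=\lim_n d_C(x_n),
\]
the last inequality holding because $\widehat{c}\in C$. Hence equality holds throughout, the infimum on the right is attained at $\widehat{c}$ (so it is in fact a minimum), and the displayed chain of equalities in the statement follows.

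I do not expect a genuine obstacle here: the proposition is essentially a repackaging of \cref{c:badlung} together with \cref{p:opial}\cref{p:opial2}. The only steps needing a word of justification are the passage from relative compactness of $(x_n)_\nnn$ to that of $(P_Cx_n)_\nnn$ via continuity of $P_C$, and the observation that the infimum defining the right-hand side is actually attained --- both of which are immediate.
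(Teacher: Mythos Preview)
Your argument is correct. It does, however, take a slightly different route from the paper's own proof. The paper argues directly: it extracts a subsequence $(x_{k_n})_\nnn$ with $d_C(x_{k_n})\to\varliminf_n d_C(x_n)$, passes to a further subsequence converging strongly to some $\overline{x}$, identifies $P_C\overline{x}$ with $A_C(x_\NN)$ via \cref{f:ac}\cref{f:ac3} and \cref{l:simple1}, and then closes a single chain of inequalities starting and ending with $\varlimsup_n d_C(x_n)$. Your approach instead first passes relative compactness through $P_C$ to invoke \cref{c:badlung} and obtain $P_Cx_n\to\widehat{c}$, then uses the reverse triangle inequality to transfer convergence of $(\|x_n-\widehat{c}\|)_\nnn$ to $(d_C(x_n))_\nnn$, and finally sandwiches with \cref{p:opial}\cref{p:opial2}. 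Your route is more modular---it reuses \cref{c:badlung} rather than re-running the underlying subsequence argument---while the paper's proof is more self-contained and exhibits all the pieces in a single display. Both are equally rigorous, and the observation that the infimum over $C$ is attained at $\widehat{c}$ (hence is a minimum) drops out identically in each.
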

% \begin{proof}
% Let $(x_{k_n})_\nnn$ be a subsequence of $(x_n)_\nnn$ such that 
% $d_C(x_{k_n})\to \varliminf d_C(x_n)$. 
% Because
% ${\{x_n\}_\nnn}$ is relatively compact, we may and do assume 
% (after passing to another subsequence and re-labelling
% if necessary)
% that $x_{k_n} \to \overline{x} \in X$. 
% By \cref{f:ac}\cref{f:ac3} 
% and \cref{l:simple1}, 
% $P_C\overline{x} = A_C(x_{k_\NN}) = A_C(x_\NN)$. 
% Using the assumption that $(x_n)_\nnn$ is Opial 
% \wrt\ $C$ and \cref{p:opial}\cref{p:opial2}, 
% we deduce that
% \begin{align*}
% \varlimsup_n d_C(x_n)
% &\geq \varliminf_n{d_C(x_{n})}
% = \lim_{n} d_C(x_{k_n}) = d_C(\overline{x})
% = \|\overline{x}-P_C\overline{x}\|
% = 
% \lim_{n}\|x_{k_n}-P_C\overline{x}\|\\
% &= \lim_{n}\|x_{n}-P_C\overline{x}\|
% \geq \inf_{c\in C} \lim_{n}\|x_{n}-c\|
% =\lim_n\|x_n-A_C(x_\NN)\|\\
% &\geq \varlimsup_n d_C(x_n).
% \end{align*}
% Hence all inequalities are in fact equalities and we are done.
% \end{proof}

\begin{proof}
Indeed, for all $n \in \NN$, we have: 
\begin{align*}
  d^2_C(x_n) &= \|x_n - P_C(x_n)\|^2\\
  &= \|x_n - A_C(x_{\NN})\|^2 + \|A_C(x_{\NN}) -  P_C(x_n)\|^2 + 2\langle x_n - A_C(x_n), A_C(x_{\NN}) -  P_C(x_n)\rangle.
\end{align*}
However, \cref{c:badlung} implies that the last two summands tend to zero as $\{P_Cx_n\}_\nnn$ is assumed to be relatively compact. 
The conclusion thus follows because 
$\lim_n \|x_n-A_C(x_\NN)\|$ in view of the Opial assumption \wrt\ $C$. 
\end{proof}

\begin{remark}
Consider \cref{p:saturday}. 
Without the relative compactness assumption, 
the conclusion that $\lim_n d_C(x_n) = 
\lim_n \|x_n-A_C(x_\NN)\|$ may fail: 
indeed, let's revisit \cref{ex:einaudi}\cref{ex:einaudi1}, 
where $(e_n)_\nnn$ is Opial \wrt\ $X$; hence, 
$d_X(e_n)\equiv 0$ while 
$A_X(e_\NN) = 0$ (by \cref{f:ac}\cref{f:ac3}) 
and so $\|e_n-A_C(e_\NN)\|\equiv 1$. 
In contrast, if $(x_n)_\nnn$ is \emph{\fejer\ monotone}
\wrt\ $C$, then $P_Cx_n\to A_C(x_\NN)$ by 
\cref{f:ac}\cref{f:ac4} and thus 
$\lim_n d_C(x_n) = \lim_n\|x_n-P_Cx_n\| = 
\lim_n \|x_n-A_C(x_\NN)\|$ without any
relative compactness assumption. 
\end{remark}

We now show that if the Opial set is affine, 
then the projected sequence is weakly convergent:

\begin{theorem}[projections onto an affine Opial set]
\label{t:nice}
Let $(x_n)_\nnn$ be a sequence in $X$ that is 
Opial \wrt\ a closed affine subspace $C$ of $X$.
Then $(P_Cx_n)_\nnn$ is weakly convergent 
to $A_C(x_\NN)$.
\end{theorem}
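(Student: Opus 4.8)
The plan is to mimic the weak-cluster-point analysis behind \cref{t:old}, the new ingredient being that projection onto an \emph{affine} subspace comes with an orthogonal decomposition that transfers the Opial property of $(x_n)_\nnn$ into a usable form for $(P_Cx_n)_\nnn$. First I would collect the basics. By \cref{p:opial}\cref{p:opial1} the sequence $(x_n)_\nnn$ is bounded, and since $P_C$ is nonexpansive so is $(P_Cx_n)_\nnn$; hence $(P_Cx_n)_\nnn$ has weak cluster points, and since $C$ is closed and convex --- hence weakly sequentially closed --- they all lie in $C$. Because $C$ is affine, for every $c\in C$ and every $\nnn$ one has $x_n-P_Cx_n\perp(C-C)$ and $P_Cx_n-c\in C-C$, so
\begin{equation*}
\|x_n-c\|^2=d_C(x_n)^2+\|P_Cx_n-c\|^2.
\end{equation*}

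Next, for weak convergence: subtracting this identity at $c_1$ and at $c_2$ (with $c_1,c_2\in C$) gives $\|x_n-c_1\|^2-\|x_n-c_2\|^2=\|P_Cx_n-c_1\|^2-\|P_Cx_n-c_2\|^2$, whose left-hand side converges since $(x_n)_\nnn$ is Opial \wrt\ $C$. Expanding the right-hand side, $\lim_n\scal{P_Cx_n}{c_1-c_2}$ exists for all $c_1,c_2\in C$; this is precisely the input driving the proof of \cref{t:old}. Consequently, if $w_1,w_2$ are two weak cluster points of $(P_Cx_n)_\nnn$, then $w_1,w_2\in C$, so $w_1-w_2\in C-C$, while passing to the limit along the two relevant subsequences yields $\scal{w_1}{c_1-c_2}=\scal{w_2}{c_1-c_2}$, whence $w_1-w_2\in(C-C)^\perp$ and therefore $w_1=w_2$. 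Since $(P_Cx_n)_\nnn$ is bounded with a single weak cluster point, it converges weakly to some $\widehat c\in C$.

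Finally, to identify $\widehat c$ with $A_C(x_\NN)$, I would use the same identity at an arbitrary $c\in C$ and at $\widehat c$, subtract, and expand to get $\|x_n-c\|^2-\|x_n-\widehat c\|^2=2\scal{P_Cx_n}{\widehat c-c}+\|c\|^2-\|\widehat c\|^2\to\|\widehat c-c\|^2$, using $P_Cx_n\weakly\widehat c$. Both limits on the left exist ($(x_n)_\nnn$ is Opial \wrt\ $C$), so $\lim_n\|x_n-c\|^2=\lim_n\|x_n-\widehat c\|^2+\|\widehat c-c\|^2$ for every $c\in C$. Thus $c\mapsto\varlimsup_n\|x_n-c\|^2$ attains its minimum over $C$ uniquely at $\widehat c$, and by \cref{f:ac}\cref{f:ac2} that minimizer is $A_C(x_\NN)$, so $\widehat c=A_C(x_\NN)$.

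The main subtlety I anticipate is that $(P_Cx_n)_\nnn$ need not itself be Opial \wrt\ $C$ --- so \cref{c:simple2} cannot simply be quoted --- because $d_C(x_n)$ need not converge; only the \emph{sum} $d_C(x_n)^2+\|P_Cx_n-c\|^2=\|x_n-c\|^2$ is controlled. The way around this is exactly to pass to differences $\|\cdot-c_1\|^2-\|\cdot-c_2\|^2$, in which the troublesome term $d_C(x_n)^2$ cancels; affineness is used twice and essentially: to produce the orthogonal decomposition, and to guarantee $P_Cx_n-c\in C-C$ so that the difference argument stays within $C-C$.
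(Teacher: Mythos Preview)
Your argument is correct, but it proceeds along a genuinely different line from the paper's. The paper exploits the fact that for an affine $C$ the projection $P_C$ is an affine map $x\mapsto z+P_Yx$ with $Y=C-C$, hence weakly continuous; it passes to weak cluster points $w_1,w_2$ of $(x_n)_\nnn$ itself, applies \cref{t:old} to get $w_1-w_2\in Y^\perp$, and then concludes $c_1-c_2=P_Y(w_1-w_2)=0$. The identification of the limit is done via \cref{f:ac}\cref{f:ac3} (so $P_Cw_1=A_C(x_{k_\NN})$) together with \cref{l:simple1}. By contrast, you never invoke weak continuity of $P_C$: you work directly with $(P_Cx_n)_\nnn$, use the Pythagorean decomposition $\|x_n-c\|^2=d_C(x_n)^2+\|P_Cx_n-c\|^2$, and take differences in $c$ so that the uncontrolled term $d_C(x_n)^2$ cancels, thereby proving that $\lim_n\scal{P_Cx_n}{c_1-c_2}$ exists and rerunning the \cref{t:old} argument for the projected sequence. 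Your identification of the limit is also self-contained, yielding the exact formula $\lim_n\|x_n-c\|^2=\lim_n\|x_n-\widehat c\|^2+\|\widehat c-c\|^2$ without appealing to \cref{f:ac}\cref{f:ac3} or \cref{l:simple1}. The paper's proof is more modular (it reduces everything to previously stated results), while yours is more elementary and makes the role of the cancellation of $d_C(x_n)^2$ explicit---which dovetails nicely with your concluding remark about why $(P_Cx_n)_\nnn$ need not be Opial \wrt\ $C$.
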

\begin{proof}
First, let $Y := C-C$ be the closed linear 
subspace parallel to $C$. 
Then $P_C\colon x\mapsto z + P_Yx$, where 
$z := P_C0 \in Y^\perp$. 
Next, let $c_1,c_2$ be two weak cluster points 
of $(P_Cx_n)_\nnn$, say 
\begin{equation*}
\text{
$P_Cx_{k_n}\weakly c_1$ 
and 
$P_Cx_{l_n}\weakly c_2$. 
}
\end{equation*}
After passing to subsequences and re-labelling if
necessary, we additionally assume that 
$x_{k_n}\weakly w_1$ and 
$x_{l_n}\weakly w_2$.
Because $P_Y$ is weakly continuous (\cite[Proposition~4.19(i)]{BC2017}), we deduce 
that $P_C$ is weakly continuous as well and thus 
$P_Cx_{k_n}\weakly c_1 = P_Cw_1 = z + P_Yw_1$ and 
$P_Cx_{l_n}\weakly c_2 = P_Cw_2 = z + P_Yw_2$. 
Hence 
\begin{equation}
\label{e:girlsback1}
c_1-c_2= P_Cw_1-P_Cw_2 = P_Y(w_1-w_2). 
\end{equation}
Furthermore,     
by \cref{t:old},
\begin{equation}
\label{e:girlsback2}
w_1-w_2 \in (C-C)^\perp = Y^\perp. 
\end{equation}
Combining \cref{e:girlsback1} with \cref{e:girlsback2}, 
we deduce that 
$c_1-c_2 = P_Y(w_1-w_2)\in P_Y(Y^\perp)=\{0\}$ 
and so $c_1=c_2$. 
We've shown that $(P_Cx_n)_\nnn$ is weakly 
convergent to $\overline{c} := c_1 = c_2$. 

Next, on the one hand, by \cref{l:simple1},
$A_C(x_\NN) = A_C(x_{k_\NN})$. 
On the other hand, by \cref{f:ac}\cref{f:ac3}, 
$\overline{c}=c_1=P_Cw_1 = A_C(x_{k_\NN})$. 
Altogether, 
$\overline{c} = A_C(x_\NN)$. 
\end{proof}

\begin{example}
\label{ex:nice}
Suppose that $X=\ell^2(\NN)$, with the standard basis 
of unit vectors $(e_n)_\nnn$. 
Define the sequence $(x_n)_\nnn$ in $X$ by 
\begin{equation}
(\forall\nnn)\quad x_n := 
\begin{cases}
e_0+e_1, &\text{if $n$ is even;}\\
e_1+e_{n+1}, &\text{if $n$ is odd}. 
\end{cases}
\end{equation}
Then the following hold:
\begin{enumerate}
\item 
\label{ex:nice1}
Set $Y := \{e_0\}^\perp$. 
The sequence 
$(x_n)_\nnn$ is Opial \wrt\ $Y$, 
but not \wrt\ $X$. 
The sequence $(P_Yx_n)_\nnn = (e_1,e_1+e_2,e_1,e_1+e_4,e_1,e_1+e_6,\ldots)$ converges weakly to 
$e_1 = A_Y(x_\NN)$ but 
$(P_Yx_n)_\nnn$  is not Opial \wrt\ $Y$. 
The sequence $(x_n-P_Yx_n)_\nnn
= (e_0,0,e_0,0,\ldots)$ does not converge weakly and 
the sequence $(d_Y(x_n))_\nnn = (1,0,1,0,\ldots,)$ is
not convergent either. 
\item 
\label{ex:nice2}
Set $B := \menge{x\in X}{\|x\|\leq 1}$ and 
$C := B\cap Y$. 
The sequence $(x_n)_\nnn$ is Opial \wrt\ $C$. 
But $(P_Cx_n)_\nnn = \tfrac{1}{\sqrt{2}}
(\sqrt{2}e_1,e_1+e_2,\sqrt{2}e_1,e_1+e_4,\sqrt{2}e_1,e_1+e_6,\ldots)$ is not weakly convergent and 
$d_C(x_n) = (1,\sqrt{2}-1,1,\sqrt{2}-1,\ldots)$ is not convergent either. 
\end{enumerate}
\end{example}
\begin{proof}
Observe first that $(x_n)_\nnn$ has exactly one 
strong cluster  point, namely $e_0+e_1$ and exactly
one different weak cluster point, namely $e_1$. 
In particular, $(x_n)_\nnn$ is not weakly convergent. 

\cref{ex:nice1}: 
Let $\nnn$ and $y\in Y$, i.e., $y\perp e_0$. 
If $n$ is even, we have 
$\|x_n-y\|^2
= \|(e_0+e_1)-y\|^2 = \|e_0+(e_1-y)\|^2
= \|e_0\|^2 + \|e_1-y\|^2 = 1+\|e_1-y\|^2$. 
And if $n$ is odd, we have 
$\|x_n-y\|^2
= \|(e_1+e_{n+1})-y\|^2
= \|(e_1-y)+e_{n+1}\|^2 
= \|e_1-y\|^2 + \|e_{n+1}\|^2 
+ 2\scal{e_1-y}{e_{n+1}}
= \|e_1-y\|^2 + 1 + 2\scal{e_1-y}{e_{n+1}}
\to \|e_1-y\|^2 +1$
because $e_{n+1}\weakly 0$. 
It follows that 
\begin{equation}
\|x_n-y\|^2 \to \|e_1-y\|^2+1,
\end{equation}
and so $(x_n)_\nnn$ is Opial \wrt\ $Y$. 
If $(x_n)_\nnn$ were Opial \wrt\ $X$, 
then $(x_n)_\nnn$ would be weakly convergent 
by \cref{c:X-Opial=>weak} which is absurd. 

If $n$ is even, we have 
$P_Yx_n = e_1$ 
and so $x_n-P_Yx_n= (e_0+e_1)-e_1 = e_0$; 
thus, $d_Y(x_n)=1$. 
If $n$ is odd, we have 
$x_n = e_1+e_{n+1}\in Y$ and 
so $P_Yx_n = x_n = e_1+e_{n+1} 
\weakly e_1$ and $d_Y(x_n) =0$.
Hence $P_Yx_n\weakly e_1$, 
which must be $A_Y(x_\NN)$ by \cref{t:nice}, 
and $(d_Y(x_n))_\nnn = (1,0,1,0,\ldots)$. 
The fact that $(P_Yx_n)_\nnn$ is not Opial \wrt\ $Y$
follows from \cref{p:Y}\cref{p:Y1} below. 

\cref{ex:nice2}: 
Because $C\subseteq Y$, it follows from \cref{ex:nice1} 
that $(x_n)_\nnn$ is Opial \wrt\ $C$. 
By \cite[Corollary~7.3]{BBW}, 
$P_C = P_B\circ P_Y$. 
Combing with \cref{ex:nice1} thus yields
\begin{equation}
P_Cx_n = 
\begin{cases}
e_1,&\text{if $n$ is even;} \\
\tfrac{1}{\sqrt{2}}(e_1+e_{n+1}), 
&\text{if $n$ is odd.}
\end{cases}
\end{equation}
If $n$ is even, then $x_n-P_Cx_n = (e_0+e_1)-e_1=e_0$
and so $d_C(x_n)=1$. 
If $n$ is odd, then 
$x_n-P_Cx_n = (1-1/\sqrt{2})(e_1+e_{n+1})$
which yields 
$d_C(x_n)= \sqrt{2}-1$. 
\end{proof}

\begin{remark}
Consider \cref{t:nice}. 
While $(P_Cx_n)_\nnn$ is weakly convergent, 
\cref{ex:nice}\cref{ex:nice1} illustrates that 
the sequence $(x_n-P_Cx_n)_\nnn$ may fail to converge weakly and the sequence $(d_C(x_n))_\nnn$ may
fail to converge as well. 
Furthermore, if we relax the assumption that $C$ be affine 
to mere convexity, 
\cref{ex:nice}\cref{ex:nice2} reveals
that $(P_Cx_n)_\nnn$ may fail to be weakly convergent. 
Thus the conclusions in \cref{t:nice} are sharp!
\end{remark}

\begin{proposition}
\label{p:Y}
Let $(x_n)_\nnn$ be a sequence in $X$, 
and let $Y$ be a closed linear subspace of $X$. 
Then the following hold:
\begin{enumerate}
\item 
\label{p:Y1}
Suppose that $(x_n)_\nnn$ is Opial \wrt\ $Y$.
Then: $(P_Yx_n)_\nnn$ is Opial \wrt\ $Y$
$\siff$ $(d_Y(x_n))_\nnn$ converges, in which case
$(P_Yx_n)$ is weakly convergent. 
\item 
\label{p:Y2}
Suppose that $(x_n)_\nnn$ is weakly convergent. 
Then:
$(P_Yx_n)_\nnn$ is Opial \wrt\ $Y$ 
$\siff$ $(\|P_Yx_n\|)_\nnn$ is convergent. 
\end{enumerate}
\end{proposition}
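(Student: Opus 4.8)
The plan is to reduce both statements to the orthogonal decomposition $X = Y\oplus Y^\perp$. For every $\nnn$ one has $x_n = P_Yx_n + (x_n - P_Yx_n)$ with $x_n - P_Yx_n\in Y^\perp$ and $d_Y(x_n) = \|x_n-P_Yx_n\|$, so the Pythagorean identity gives the single relation $\|x_n-y\|^2 = \|P_Yx_n-y\|^2 + d_Y(x_n)^2$, valid for all $y\in Y$ and all $\nnn$, which will drive \cref{p:Y1}. For \cref{p:Y2} the relevant identity is instead the expansion already used in the proof of \cref{p:weakvsX-Opial}.

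For \cref{p:Y1}, the hypothesis that $(x_n)_\nnn$ is Opial \wrt\ $Y$ says exactly that $(\|x_n-y\|^2)_\nnn$ converges for every $y\in Y$. If $(d_Y(x_n))_\nnn$ converges, then so does $(d_Y(x_n)^2)_\nnn$, and the Pythagorean identity forces $(\|P_Yx_n-y\|^2)_\nnn$ to converge for every $y\in Y$, i.e.\ $(P_Yx_n)_\nnn$ is Opial \wrt\ $Y$. Conversely, instantiating the identity at $y=0\in Y$ writes $d_Y(x_n)^2$ as the difference of the two convergent sequences $(\|x_n\|^2)_\nnn$ and $(\|P_Yx_n\|^2)_\nnn$, so $(d_Y(x_n)^2)_\nnn$ converges and hence so does $(d_Y(x_n))_\nnn$ by continuity of the square root. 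Once either condition holds, $(P_Yx_n)_\nnn$ is a sequence in the nonempty closed convex set $Y$ that is Opial \wrt\ $Y$, so \cref{c:simple2} gives $P_Yx_n\weakly A_Y(P_Yx_\NN)$; in particular it is weakly convergent.

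For \cref{p:Y2}, since $P_Y$ is linear and bounded it is weakly continuous (\cite[Proposition~4.19(i)]{BC2017}), so from $x_n\weakly\overline{x}$ one gets $P_Yx_n\weakly\overline{y}:=P_Y\overline{x}\in Y$. Then I would copy the computation in \cref{p:weakvsX-Opial}: for every $y\in Y$, $\|P_Yx_n-y\|^2 = \|P_Yx_n\|^2 - \|\overline{y}\|^2 + \|\overline{y}-y\|^2 + 2\scal{\overline{y}-P_Yx_n}{y}$, where the inner-product term tends to $0$. Hence, for each fixed $y\in Y$, $\lim_n\|P_Yx_n-y\|$ exists iff $\lim_n\|P_Yx_n\|$ exists; quantifying over $y\in Y$ yields precisely the asserted equivalence between $(P_Yx_n)_\nnn$ being Opial \wrt\ $Y$ and $(\|P_Yx_n\|)_\nnn$ being convergent.

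I do not expect a genuine obstacle: the earlier results do the heavy lifting. The only points requiring a little care are that ``Opial \wrt\ $Y$'' quantifies over all of $Y$, so in the forward directions one should instantiate the relevant identity at the convenient point $y=0$ (legitimate since $0\in Y$), while in the reverse directions one must verify the conclusion for every $y\in Y$, which both displayed identities make transparent; and the routine passage between convergence of a nonnegative sequence and convergence of its square, via continuity of $t\mapsto t^2$ and $t\mapsto\sqrt t$.
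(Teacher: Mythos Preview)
Your proposal is correct and follows essentially the same approach as the paper: the Pythagorean identity $\|x_n-y\|^2=\|P_Yx_n-y\|^2+d_Y(x_n)^2$ for \cref{p:Y1} and the polarization-style expansion for \cref{p:Y2}. The only cosmetic differences are that for the weak convergence in \cref{p:Y1} the paper invokes \cref{c:X-Opial=>weak} (applied in the Hilbert space $Y$) rather than \cref{c:simple2}, and in \cref{p:Y2} the paper pivots around the weak limit $x$ itself, using $\scal{P_Yx_n}{y}=\scal{x_n}{y}$ for $y\in Y$, whereas you pivot around $P_Y\overline{x}$ via weak continuity of $P_Y$; both routes are valid and equally short.
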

\begin{proof}
\cref{p:Y1}: 
We have $(\forall y\in Y)(\nnn)$ 
$\|x_n-y\|^2 = \|P_Yx_n-y\|^2 + d^2_Y(x_n)$.
Because $\lim_n \|x_n-y\|$ exists, 
the announced equivalences holds. 
In this case, 
\cref{c:X-Opial=>weak} gives the weak convergence of 
$(P_Yx_n)_\nnn$ (in $Y$), which in turn gives
the weak convergence of $(P_Yx_n)_\nnn$ (in $X$).

\cref{p:Y2}: 
Suppose that $x_n\weakly x \in X$, and write 
\begin{align*}
(\forall y\in Y)(\forall\nnn)\quad
\|P_Yx_n-y\|^2
&=
\|P_Yx_n\|^2-\|x\|^2 + \|x-y\|^2
+2\scal{x-P_Yx_n}{y}.
\end{align*}
Note that $\scal{x-P_Yx_n}{y} = \scal{x-x_n}{y}\to 0$.
Thus $(P_Yx_n)_\nnn$ is Opial \wrt\ $Y$ 
$\siff$
$(\|P_Yx_n\|)_\nnn$ is convergent. 
\end{proof}

\begin{example}
\label{ex:nolinear}
Suppose that $X=\RR^2$ and set $Y:=\RR\times\{0\}$.
Let $(\varepsilon_n)_\nnn$ be a strictly decreasing 
sequence 
of positive real numbers with $\varepsilon_n\to 0$. 
Define the sequence 
$(x_n)_\nnn := (\varepsilon_n,(-1)^n)_\nnn$ in $X$. 
Then $(x_n)_\nnn$ is Opial \wrt\ $Y$ because 
for every $y=(\eta,0)\in Y$, we have 
$\|x_n-y\|^2 = (\varepsilon_n-\eta)^2 + 1
\to \eta^2+1$. 
There is no proper superset $D$ of $Y$ such that 
$(x_n)_\nnn$ is Opial \wrt\ $D$. 
Moreover, $P_Y(x_n)=(\varepsilon_n,0)\to (0,0) 
\neq (\varepsilon_0,0) = P_Y(x_0)$ 
and $(x_n-P_Yx_n)_\nnn = (0,(-1)^n)_\nnn$ does not converge. 
\end{example}

\begin{remark}
If a sequence $(x_n)_\nnn$ is \fejer\ monotone 
\wrt\ a closed linear subspace $Y$ of $X$, 
then it is well known that $P_Yx_n\equiv P_Yx_0$ 
(see, e.g., \cite[Proposition~5.9(i)]{BC2017}). 
\cref{ex:nolinear} illustrates that this 
projection property
does not hold for Opial sequences. 
\end{remark}

\section*{Acknowledgments}
% The authors that Dr.\ Juan Peypouquet for his comments on 
% Opial's Lemma. 
The authors thank the co-editor, Dr.~Andrea~Lodi, and 
two anonymous reviewers for their very careful reading, and 
helpful and constructive comments. 
We also thank 
Dr.\ Juan Peypouquet for his comments on 
Opial's Lemma and 
Dr.\ Patrick Combettes for providing us with a copy of 
\cite{BaillonThesis}. 
The research of HHB was supported by a Discovery Grant from 
 the Natural Sciences and Engineering Research Council of Canada. 
 
\section*{Declarations}

\textbf{Conflict of interest:}
There are no financial or non-financial interests that 
are directly or indirectly related to this work.

\end{document}